\documentclass[11pt,reqno]{amsart}

\usepackage[margin=1.1in]{geometry}
\usepackage[T1]{fontenc}
\usepackage{lmodern}
\usepackage{amsmath,amssymb,amsthm,mathtools}
\usepackage{microtype}
\usepackage{xcolor}
\usepackage{tikz}
\usepackage{flafter}
\definecolor{linkblue}{RGB}{25,55,115}
\usepackage[colorlinks=true,linkcolor=linkblue,citecolor=linkblue,
  urlcolor=linkblue,bookmarksnumbered=true]{hyperref}
\hypersetup{pdftitle={A law of large numbers for Macdonald coherent measures}}

\newtheorem{theorem}{Theorem}[section]
\newtheorem{proposition}[theorem]{Proposition}
\newtheorem{lemma}[theorem]{Lemma}
\newtheorem{corollary}[theorem]{Corollary}
\theoremstyle{definition}

\theoremstyle{remark}
\newtheorem{remark}[theorem]{Remark}
\numberwithin{equation}{section}

\newcommand{\Y}{\mathbb Y}
\newcommand{\R}{\mathbb R}
\newcommand{\Prob}{\mathbb P}
\newcommand{\E}{\mathbb E}
\newcommand{\ddim}{\operatorname{dim}_{q,t}}
\newcommand{\tdim}{\operatorname{dim}_{t,q}}
\newcommand{\wt}{\operatorname{wt}}
\newcommand{\eps}{\varepsilon}
\DeclareMathOperator{\Bin}{Bin}

\begin{document}

\title{A law of large numbers for Macdonald coherent measures}

\author{Vadim Gorin}

\thanks{The work was partially supported by NSF Grant DMS-2553163.}

\address{University of California at Berkeley}
\email{vadicgor@gmail.com}

\date{}

\begin{abstract}
We resolve a conjecture on the law of large numbers for
coherent measures on Young diagrams associated with Macdonald-nonnegative specializations of the algebra of symmetric functions for all $0\le q,t<1$. We prove that row and column lengths divided by the diagram size
converge to the corresponding specialization parameters. The proof combines a
monomial estimate for skew Macdonald polynomials with exponential tilting
and binomial decompositions obtained from the branching rule for the polynomials.
\end{abstract}

\maketitle

\section{Introduction}\label{sec:intro}

\subsection{History of the problem}
Thoma parameters  $\alpha_1\ge \alpha_2\ge\dots\ge 0$, $\beta_1\ge \beta_2\ge\dots\ge 0$, ${\sum_{i}(\alpha_i+\beta_i)\le 1}$, \cite{Edr52,Tho64}, classify extreme characters of
the infinite symmetric group on the one hand and govern the macroscopic geometry of
random Young diagrams on the other hand. The two viewpoints are linked by the Vershik-Kerov law of
large numbers \cite{VK81}, which identifies characters with
coherent systems of probability measures on Young diagrams (relying on the branching rule for irreducible representations of finite symmetric groups) and recovers the parameters as the limiting row
and column frequencies for growing diagrams. Various approaches to the Thoma classification theorem can be found in \cite{Tho64,VK81,KOO98,Oko99,BG15}, see also the books \cite{Ker03,BO16} for overviews.

The classical Frobenius characteristic map can be used to identify extreme characters with
normalized Schur-nonnegative specializations of the algebra of symmetric functions. This reformulation leads to a natural $(q,t)$ deformation of the problem by replacing Schur functions with Macdonald symmetric functions (see \cite{Mac95}). Schur functions are recovered at $q=t$, while other important cases include Jack functions ($t=q^{\theta}$, $q\to 1$) related to representations of infinite-dimensional Gelfand pairs, Hall-Littlewood functions ($q=0$) related to matrices over finite fields, monomial functions ($t=1$) related to Kingman partition structures. Macdonald-nonnegative
specializations for the general $(q,t)$ case play an important role in the probabilistic framework of Macdonald processes, see \cite{BC14,BCGS16}.

Kerov conjectured in 1992 \cite[Section~7.3]{Ker92} that the classification of all Macdonald-nonnegative specializations is governed by the same Thoma parameters, yet the formulas for the specializations depend on $(q,t)$ in a non-trivial fashion. After 25 years of effort, this was proven in a breakthrough paper by Matveev \cite{Mat19}. The corresponding interpretation of the parameters as row and column frequencies was also widely expected to extend to the general $(q,t)$ case, as reflected in the literature from the Jack case \cite{KOO98}, the 1997-2000 drafts of Vershik and Kerov on matrices over finite fields \cite{VK07}, the $q=0$ result of \cite{BP15}, and the explicit formulation of the general $(q,t)$ case in \cite[Conjecture~1.9]{BP15}. However, the proof of \cite{Mat19} does not deliver this interpretation, while the methods of \cite{KOO98} and \cite{BP15} do not extend to general $(q,t)$.

The goal of this paper is to prove the probabilistic counterpart of Kerov's conjecture and identify the parameters of specializations with frequencies of rows and columns for all $0\le q,t<1$.

\subsection{The measures and the main result}
Let $\Y_n$ be the set of partitions of $n$, identified with Young diagrams.
For $\lambda\in\Y_n$, write $\lambda_i$ and $\lambda'_j$ for its row and
column lengths, respectively. Fix
\[
  0\le q,t<1,\qquad\text{ and set }\qquad \theta=\frac{1-t}{1-q}.
\]
Let $\Lambda$ be the algebra of symmetric functions in infinitely many variables $(x_1,x_2,\dots)$,
and let $P_\lambda(\cdot;q,t)\in\Lambda$ be the Macdonald symmetric functions, as in \cite[Chapter~VI]{Mac95}. Define the positive numbers
$\ddim(\lambda)$ using $p_1=x_1+x_2+x_3+\dots$ as the coefficients in the decomposition
\begin{equation}\label{eq_dimension_expansion}
  p_1^n=\sum_{\lambda\in\Y_n}\ddim(\lambda)P_\lambda.
\end{equation}
When $q=t$, these are dimensions of irreducible representations of the symmetric group $S_n$ indexed by $\lambda$. A \emph{specialization} is a unital algebra homomorphism $\rho:\Lambda\to\R$. We write $f(\rho)$ for
the image of a symmetric function $f$. The specialization is \emph{Macdonald-nonnegative}
if $P_\lambda(\rho)\ge0$ for every $\lambda$.
We call it normalized if $p_1(\rho)=1$. By \cite[Theorem 1.4]{Mat19} every such
specialization is uniquely determined by parameters
\begin{equation}\label{eq_parameter_space}
 \alpha_1\ge\alpha_2\ge\cdots\ge0,\qquad
 \beta_1\ge\beta_2\ge\cdots\ge0,\qquad \gamma\ge0,\qquad
 \sum_{i=1}^{\infty}\alpha_i+\sum_{j=1}^{\infty}\beta_j+\gamma=1.
\end{equation}
The corresponding specialization is defined on power sums $p_r=\sum_i (x_i)^r$ which generate $\Lambda$:
\begin{equation}\label{eq_power_sums_specialization}
 p_1(\rho)=1,\qquad
 p_r(\rho)=\sum_{i=1}^{\infty}\alpha_i^r+
 (-1)^{r-1}\theta^r\frac{1-q^r}{1-t^r}\sum_{j=1}^{\infty}\beta_j^r,
 \quad r\ge2.
\end{equation}
The relation to the conventions in
\cite{BC14,BP15,Mat19} is given in Section~\ref{sec:specializations}. The associated probability measure on $\Y_n$ is
\begin{equation}\label{eq_coherent_measure}
 M_n^\rho(\lambda)=\ddim(\lambda)P_\lambda(\rho),\qquad \lambda\in\Y_n.
\end{equation}
The normalization $\sum_{\lambda\in\Y_n} M_n^\rho(\lambda)=1$ follows by applying $\rho$ to
\eqref{eq_dimension_expansion}.

\begin{theorem} \label{Theorem_uniform}
 Let $0\le q,t<1$, and let $\rho$ be the normalized Macdonald-nonnegative
specialization with parameters \eqref{eq_parameter_space}. For every $\varepsilon>0$
there exist $C,c>0$ such that for all $n\ge 1$
\begin{equation}\label{eq_uniform_concentration}
 M_n^\rho\left[
 \max\left(\sup_{i\ge1}\left|\frac{\lambda_i}{n}-\alpha_i\right|,
          \, \sup_{j\ge1}\left|\frac{\lambda'_j}{n}-\beta_j\right|\right)
 >\varepsilon\right]\le Ce^{-cn}.
\end{equation}
\end{theorem}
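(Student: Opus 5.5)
We plan to prove Theorem~\ref{Theorem_uniform} by reducing the uniform statement to finitely many one-sided tail bounds for individual rows and columns, proving these by exponential tilting combined with the branching rule. The first reduction is standard. Fix $\varepsilon>0$ and choose $K$ so that $\alpha_{K+1}+\beta_{K+1}<\varepsilon/4$. It suffices to show, with exponentially small failure probability, two families of estimates. The lower bounds are $\lambda_1+\dots+\lambda_k\ge n(\alpha_1+\dots+\alpha_k-\delta)$ and the analogous bounds for $\lambda'$, for $k\le K$. The upper bounds are $\lambda_k\le n(\alpha_k+\delta)$ and $\lambda'_k\le n(\beta_k+\delta)$, for $k\le K+1$.

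These combine as follows. The upper bounds for $k\le K+1$ and monotonicity of rows give $\lambda_i\le n(\alpha_{K+1}+\delta)$ for all $i>K$. The row and column bounds interact through the observation that the first $K$ rows and $K$ columns share at most $K^2$ boxes. Hence the lower bounds on partial sums, together with $\sum\alpha+\sum\beta+\gamma=1$, force each individual $\lambda_k$ to be within $O(K\delta)n+K^2$ of $n\alpha_k$. We then take $\delta=\varepsilon/(10K)$, and treat small $n$ by enlarging $C$.

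\emph{Upper tails.} For $\lambda_k>n(\alpha_k+\delta)$ we use the Markov-type idea that a long $k$-th row forces a large monomial coefficient. We write
\[
M_n^\rho(\lambda)=\ddim(\lambda)P_\lambda(\rho).
\]
We then split $\rho$ into two pieces: the $\alpha$-variables $x_1,\dots,x_{k-1}$ (together with a finite number of further variables), and a remainder specialization $\rho'$. By the branching rule, $P_\lambda(\rho)$ becomes a sum of skew polynomials $P_{\lambda/\mu}$ in the first variables times $P_\mu(\rho')$. The skew Macdonald polynomial in $k-1$ variables vanishes unless $\lambda/\mu$ is a horizontal-strip chain. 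It is also bounded by a monomial estimate of the form
\[
P_{\lambda/\mu}(x_1,\dots,x_{k-1})\le C^{\,k}\,(\text{number of SSYT})\prod_i x_i^{\lambda_i-\mu_i}.
\]
Consequently a row of length $>n(\alpha_k+\delta)$ must be ``paid for'' either by the remaining variables, each of size at most $\alpha_k$, or by the $\beta/\gamma$ part. Here we apply exponential tilting: we compare with the specialization in which $\alpha_k,\alpha_{k+1},\dots$ are rescaled by $e^{s}$. The generating function $\sum_n z^n\sum_\lambda M^\rho_n(\lambda)e^{s\lambda_k}$ can be controlled through the explicit product formula for $\sum_\lambda P_\lambda(\rho)Q_\lambda(\cdot)$. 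A Chernoff bound then yields the rate $e^{-cn}$. Columns are handled by the $\omega_{q,t}$ involution, which swaps $(q,t)$, $\alpha\leftrightarrow\beta$ and $\lambda\leftrightarrow\lambda'$; this is consistent with the formula \eqref{eq_power_sums_specialization}.

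\emph{Lower tails.} For the lower bounds on partial sums we use a binomial decomposition. Branching $P_\lambda(\rho)$ with respect to the single variable $\alpha_1$ (and iterating over $\alpha_1,\dots,\alpha_k$) expresses $M_n^\rho$ as a mixture. In this mixture, each of the $n$ boxes is independently ``assigned'' to $\alpha_i$ with probability $\alpha_i$, up to $(q,t)$ corrections bounded uniformly. So the number of boxes contributed by $\alpha_1,\dots,\alpha_k$ dominates a $\Bin(n,\alpha_1+\dots+\alpha_k)$ variable, up to a multiplicative constant on probabilities. Boxes contributed by $k$ single variables lie in the first $k$ rows. Standard binomial large deviations then give the lower bound, and dually for columns.

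\emph{Main obstacle.} We expect the main difficulty to be making the monomial estimate for skew Macdonald polynomials uniform, with constants independent of $n$. Unlike Schur functions, the branching coefficients $\psi_{\lambda/\mu}(q,t)$ are not all $1$. We would try first to bound them above and below by constants depending only on $q,t$ and the number of variables, using $0\le q,t<1$, via the product formula for $\psi$. Any sub-exponential loss is acceptable.
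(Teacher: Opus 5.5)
\textbf{There is a genuine gap in the upper tails.} Your reduction, your lower tails and your passage to columns via $\omega_{q,t}$ are fine. The lower tail is exactly Remark~\ref{rem:binomial-lower}, and the binomial law there is exact: with $\sigma=(\alpha_1,\dots,\alpha_k;0;0)$ as the inner part, Proposition~\ref{Proposition_binomial} gives $|\mu|\sim\Bin(n,A_k)$, with no $(q,t)$ corrections. The $K^2$-overlap remark cannot bound $S_K(\lambda)$ from above when $\gamma>0$. It is your individual upper bounds on $\lambda_k$, together with the lower bounds on $S_k$, that pin down each row.

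\textbf{The uniform skew estimate you hope for is false.} You want weight bounds depending only on $(q,t)$ and the number of variables, but these fail once $\mu\neq\varnothing$. Already at $q=0$, $d=1$, take $\mu=(2m,2m-2,\dots,2)$ and $\lambda=(2m+1,2m-1,\dots,3,1)$; then $\psi_{\lambda/\mu}=(1-t)^m$, exponentially small in $\mu_1$. Also, the dominant monomial for $x_1\ge\dots\ge x_d$ is $x^\kappa$, where $\kappa_j$ counts the skew columns of height $\ge j$. It is not $\prod_i x_i^{\lambda_i-\mu_i}$, which need not even be a monomial in $d$ variables. Moreover, $\lambda_j-\kappa_j$ can be as large as $\mu_1$. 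The correct uniform statement (Theorem~\ref{Theorem_monomial_localization}, Corollary~\ref{Corollary_tilt}) loses a factor $e^{\mu_1(C+k\max(s,0))}$. This is exponential in $n$ unless $\mu_1=o(n)$.

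\textbf{The missing idea is sublinear rows for the non-$\alpha$ part.} Your Chernoff bound closes only if the inner diagram, which comes from the non-$\alpha$ part, satisfies $\mu_1\le\delta n$ with exponentially high probability for arbitrarily small $\delta$. Your sentence that a long row ``must be paid for by the $\beta/\gamma$ part'' is precisely what needs proof: that $(0;\beta;\gamma)$ produces rows of length $o(n)$. The paper proves this in Section~\ref{Section_noalpha}. Deleting the first row gives $\ddim(\lambda)\le e^{2C_0n}\binom{n}{r}\ddim(\nu)$. The $1/n!$ in $M_n^{(0;0;1)}$ then yields $M_n^{(0;0;1)}[\lambda_1\ge\delta n]\le e^{-\delta n\log n+O(n)}$. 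Finally, each single-$\beta$ layer adds only a vertical strip. Conversely, the weight bound you call the ``main obstacle'' is easy when $\mu=\varnothing$, where $|\log\psi_T|\le 2C_1d^2$. So the obstacle is misidentified.

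\textbf{The tilting step must be set up differently.} No specialization inserts $e^{s\lambda_k}$ into a Cauchy product. What is available is the exact identity \eqref{eq:skew-tilted-expectation}, $\E[P_{\lambda/\mu}(x)/P_{\lambda/\mu}(y)]=(p+\sum_j x_j)^n$. Take $x_j=e^sy_j$ for $j\le k$. The monomial estimate then turns the ratio into $e^{sS_k(\lambda)}$, up to the $\mu_1$-losses above. Upper bounds on $S_k$, combined with your lower bounds on $S_{k-1}$, give $\lambda_k\le n(\alpha_k+\delta)$. Tilting $\alpha_k,\alpha_{k+1},\dots$ instead produces a statistic like $\kappa_k+\kappa_{k+1}+\cdots$, not $\lambda_k$.

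\textbf{The small $\alpha$'s need their own binomial layer.} Only finitely many $\alpha$'s can go into the skew part, since $C=C(q,t,d)$ grows with $d$. The small ones cannot go into the inner part either, because they would make $\mu_1$ of order $n\sum_{j>d}\alpha_j$. The paper therefore first peels off $(\alpha_{d+1},\alpha_{d+2},\dots;0;0)$ as an outer layer, contributing only $\Bin(n,r)$ boxes with $r<\eps/8$. It then runs the tilting argument on $(\alpha_1,\dots,\alpha_d;\beta;\gamma)$, whose inner part is purely $(0;\beta;\gamma)$.
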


There is a natural way to couple the measures $M_n^\rho$ over all $n=1,2,\dots$, see Section \ref{Section_coherence},
arriving at a probability measure on growing Young diagrams (or random paths in the Young graph). This is a consequence of the \emph{coherency relationship}\footnote{Hence, the name ``coherent measures'' for $M_n^\rho$.} between $M_n^{\rho}$ and $M_{n-1}^\rho$, which says that the latter can be obtained from the former by one step of a canonical
Markov chain not depending on $\rho$.
Under this coupling of the measures $M_n^{\rho}$, \eqref{eq_uniform_concentration} and the Borel-Cantelli lemma imply almost sure convergence:
$$
 \lim_{n\to\infty} \frac{\lambda_i}{n}=\alpha_i,\qquad \lim_{n\to\infty} \frac{\lambda'_j}{n}=\beta_j, \qquad i,j\ge 1.
$$

\begin{remark}
The restriction $t<1$ is essential in Theorem \ref{Theorem_uniform}. At $t=1$, with $0\le q<1$,
the Macdonald polynomials reduce to monomial symmetric functions,
and the boundary becomes the Kingman simplex, parametrized by
$\alpha_1\ge\alpha_2\ge\cdots\ge0$ with $\sum_i\alpha_i\le1$. There are no dual parameters $\beta_j$ anymore, cf.\
\cite[Section~4]{KOO98}. The column conclusion of \eqref{eq_uniform_concentration} then fails:
the specialization $p_1=1$, $p_r=0$ for $r\ge2$, produces
the deterministic diagram $(1^n)$, with $\lambda_1'=n$.
\end{remark}

\subsection{Related work} For special values of $(q,t)$, Theorem \ref{Theorem_uniform} connects to extensive literature.

\subsubsection{Schur measures and random words}
When $q=t$, the measures \eqref{eq_coherent_measure} arise from independent
letters through the generalized Robinson--Schensted--Knuth correspondence
of \cite{KV86}. The $\alpha_i$ and $\beta_j$ parameters give
the probabilities of two types of discrete letters, with opposite
repetition conventions. \cite[Theorem~2]{Buf12} used this construction to get both
a law of large numbers and a central limit theorem for $\lambda_i$ and $\lambda'_j$.
In parallel, \cite{Mel12} obtained a central limit theorem through
the algebra of observables of Young diagrams. Further developments in \cite{FMN20} give concentration inequalities for polynomial observables, which also imply the $q=t$ case of Theorem \ref{Theorem_uniform}. For finitely many $\alpha_i$ and no other non-zero parameters, the law of large numbers
and finer fluctuation results also appear in Its, Tracy, and Widom
\cite[Section~3]{ITW01} and Houdr\'e and Xu \cite{HX13}.  Poissonizing \eqref{eq_coherent_measure} at $q=t$ produces a Schur measure of \cite{Oko01}, whose particle configuration $\{\lambda_i-i\}_{i\ge 1}$ is
a determinantal point process. Through steepest descent analysis of the contour integral
representation of the kernel and subsequent de-Poissonization  (cf.\ \cite{BOO00,Joh01}), it should
again be possible to recover the $q=t$ case of Theorem \ref{Theorem_uniform}. The determinantal structure is not expected to extend to the general $(q,t)$ setting analyzed in our work.

\subsubsection{Hall--Littlewood measures and matrices over finite fields}
At $q=0$ and $t=\mathfrak q^{-1}$, with $\mathfrak q$ a prime power,
the measures $M_n^\rho$ describe Jordan types of finite corners of
infinite unipotent upper-triangular matrices over $\mathbb F_{\mathfrak q}$.
The associated law of large numbers was conjectured by Vershik and Kerov
in drafts written in 1997--2000 \cite[Part~II, Section~4]{VK07}; see also
\cite[Conjecture~4.5]{GKV14}. \cite{Bor99} proved the law
of large numbers and a central limit theorem for the uniform triangular
matrix model. \cite{BP15} proved the Hall--Littlewood law of large numbers for $\gamma=0$ using randomized RSK. Hence, a corollary of Theorem \ref{Theorem_uniform}
is the extension of the law of large numbers of Borodin, Bufetov, and Petrov to all parameters \eqref{eq_parameter_space}, completing the
proof of the Vershik-Kerov law-of-large-numbers conjectures at $q=0$.

\subsubsection{Macdonald processes}
The measures \eqref{eq_coherent_measure} are fixed-partition-size versions of Macdonald
measures investigated in \cite{BC14,BCGS16}. These articles used Macdonald
difference operators and their variations to produce expressions for expectations of symmetric polynomials in variables
$\{q^{\lambda_i}t^{1-i}\}$. Such observables yield extensive asymptotic information, including various laws of large numbers for $q=t$, $q=0$, $t=0$, and $q,t\to 1$ regimes, see, e.g., \cite{BC14,BoG15,Ahn20}. However,
these methods have not yielded the law of large numbers considered here for general fixed $0<q,t<1$. A technical reason is that for $0<q<1$ and growing $\lambda_i$, the expectation $\E q^{\lambda_i}$ can be dominated not by typical behavior of $\lambda_i$ (such as the law of large numbers), but rather by large-deviation events.

\subsubsection{Geometric Martin boundaries for branching graphs} The law of large numbers can follow for free from general boundary theory in  situations when the Martin boundary of a branching graph (of which the Young graph governing branching of irreducible representations of symmetric groups is an example) can be identified. This is the case for the Jack-deformed Young graph ($q,t\to 1$ limit of our setting) in \cite{KOO98}, for the graph of projective representations of symmetric groups ($q=0$, $t=-1$) in \cite{Naz92,Iva99}, for the Gelfand-Tsetlin graph (branching graph for unitary groups) in \cite{OO98}, its BC-type deformation in \cite{OO06}, $q$--deformations in \cite{Gor12,Pet14,GP15,CG20}, and a $(q,t)$ deformation related to Macdonald polynomials in \cite{Cue18,Ols21}. At a technical level, these papers studied asymptotics of symmetric polynomials as the number of variables goes to infinity by using expansions in interpolation polynomials or contour integral representations. While these approaches have been known since the 1990s, attempts to adapt them to the setting of Theorem \ref{Theorem_uniform} did not succeed.

\subsection{The proof} Our proof of Theorem \ref{Theorem_uniform} does not use any of the machinery
of the previous section. Instead, it is based on three ideas.

The first ingredient is an approximation for a skew Macdonald polynomial by a single monomial. For a pair of diagrams $\mu\subseteq\lambda$
with at most $d$ boxes in each column of the skew diagram $\lambda/\mu$, fill each column of this diagram from top to
bottom with $1,2,\ldots$, and let $\kappa_j$ be the number of entries
equal to $j$. We prove in Theorem \ref{Theorem_monomial_localization} that for $x_1\ge\cdots\ge x_d>0$,
\begin{equation}\label{eq_intro_localization}
 \log P_{\lambda/\mu}(x_1,\ldots,x_d)
 = \sum_{j=1}^d\kappa_j\log x_j + O_{q,t,d}\bigl(\mu_1+\log(1+\lambda_1)\bigr).
\end{equation}

Second, the identity \eqref{eq_dimension_expansion} implies that for $\rho=(\alpha_1,\dots,\alpha_d; 0; 0)$
\begin{equation}
\label{eq_expectation_identity}
 \E_{M^{\rho}_n} \left[\frac{P_\lambda(x_1,\dots,x_d)}{P_\lambda(\rho)} \right]= (x_1+\dots+x_d)^n.
\end{equation}
Combining with \eqref{eq_intro_localization} for $\mu=\emptyset$ and Markov's inequality, this leads to \eqref{eq_uniform_concentration} for this particular $\rho$.

Third, the full statement of Theorem \ref{Theorem_uniform} involves infinitely many $\alpha_i$, as well as dual parameters, and $\gamma$. In order to handle that, we use a coupling between measures with different parameters \eqref{eq_parameter_space}, which is a relative of Kerov's mixing construction, iteratively building general parameters \eqref{eq_parameter_space} as unions of specializations in single $\alpha$, single $\beta$, and single $\gamma$. Our version of this construction, given in Proposition \ref{Proposition_binomial}, goes in the opposite direction: given $M_n^\rho$-distributed $\lambda$, we construct a random subdiagram $\mu\subseteq\lambda$, which corresponds to a subset of parameters in $\rho$. The number of boxes in $\mu$ is random, but easily controlled: it is given by a binomial distribution. Due to the nature of this coupling, using only $\mu=\emptyset$ in \eqref{eq_intro_localization} as in \eqref{eq_expectation_identity} is no longer sufficient, and we need to use the full statement of \eqref{eq_intro_localization}. We also need to control the rows for specializations $\rho$ with no $\alpha$'s by a separate argument.

\bigskip

Our arguments also provide new, shorter proofs for the previously known particular cases $q=t$ and $q=0$. It would be interesting to see whether these ideas can be adapted to other related branching graphs studied in the recent literature, such as \cite{GK26} and \cite{CO26}.

\subsection{Organization of the paper}
Section \ref{sec:preliminaries} records the algebraic conventions. Section \ref{Section_binomial_coupling} explains how the coupling of two specializations works and establishes a skew version of \eqref{eq_expectation_identity}. Section \ref{Section_noalpha} proves Theorem \ref{Theorem_uniform} for rows in the special case $\alpha_1=\alpha_2=\dots=0$.
Section \ref{Section_localization_to_monomial} proves the monomial estimate \eqref{eq_intro_localization} for skew Macdonald polynomials.
Section \ref{Section_concentration} combines the ingredients of the previous three sections to complete the proof of Theorem \ref{Theorem_uniform}.

\section{Macdonald symmetric functions and coherent measures on Young diagrams}\label{sec:preliminaries}

We use \cite[Chapter~VI]{Mac95} as the basic reference for Macdonald
symmetric functions; many of the formulas are also recorded in \cite[Chapter 2]{BC14}. In this section we recall various known facts and notations which will be used later.

\subsection{Notations}\label{sec:identities}
All the symmetric functions of our interest are elements of the algebra of symmetric functions $\Lambda$ in infinitely many variables $(x_1,x_2,\dots)$, with real coefficients. One possible set of algebraic generators of $\Lambda$ is given by the power sums $p_r=\sum_i (x_i)^r$, $r=1,2,\dots$.  Unless parameters
are displayed, the Macdonald symmetric functions $P_\lambda\in\Lambda$, $Q_\lambda\in\Lambda$, and their skew analogues have
parameters $(q,t)$.

We write $\mu\subseteq\lambda$ for inclusion of Young diagrams and
$\mu\nearrow\lambda$ when $\mu\subset \lambda$ and the skew diagram $\lambda/\mu$ (i.e., set-theoretic difference of the sets of boxes of $\lambda$ and $\mu$) consists of one box. $|\lambda|=\lambda_1+\lambda_2+\dots$ is the total number of boxes in $\lambda$.
The length $\ell(\lambda)$ is the number of nonzero parts in $\lambda$. We write
\begin{equation}\label{eq:partial-sums}
 S_k(\lambda)=\sum_{i=1}^k\lambda_i,\qquad k\ge1,\qquad S_0(\lambda)=0.
\end{equation}
A horizontal strip is a skew diagram $\lambda/\mu$ which has at most one box in each column; a vertical
strip is a skew diagram $\lambda/\mu$ with at most one box in each row. For a box $s=(i,j)\in\lambda$, its
arm and leg lengths are
\[
 a_\lambda(s)=\lambda_i-j,\qquad l_\lambda(s)=\lambda'_j-i.
\]
Set
\begin{equation}\label{eq:b}
 b_\lambda(s)=
 \frac{1-q^{a_\lambda(s)}t^{l_\lambda(s)+1}}
      {1-q^{a_\lambda(s)+1}t^{l_\lambda(s)}},
 \qquad b_\lambda=\prod_{s\in\lambda}b_\lambda(s),
 \qquad Q_\lambda=b_\lambda P_\lambda,
\end{equation}
and set $b_\lambda(s)=1$ for $s\notin\lambda$.
The bases $P_\lambda$ and $Q_\lambda$ are dual with respect to the scalar product on $\Lambda$ defined on products of power sums $p_\lambda=\prod_{i\ge 1} p_{\lambda_i}$ via:
\begin{equation}\label{eq:scalar-product}
 \langle p_\lambda,p_\mu\rangle_{q,t}
 =\mathbf1_{\{\lambda=\mu\}}z_\lambda
   \prod_{i=1}^{\ell(\lambda)}\frac{1-q^{\lambda_i}}{1-t^{\lambda_i}},
\end{equation}
where $z_\lambda=\prod_{r\ge1}r^{m_r}m_r!$ and $m_r$ is the multiplicity
of $r$ in $\lambda$. Our convention gives
$\langle p_1,p_1\rangle_{q,t}=\theta^{-1}$ and agrees with that of \cite{KOO98} in the Jack limit $(q,t)\to 1$. The duality of the $P$- and $Q$-bases gives
\begin{equation}\label{eq:dimension-pairing}
 \ddim(\lambda)=\langle p_1^{|\lambda|},Q_\lambda\rangle_{q,t}.
\end{equation}

The one-box Pieri rule for Macdonald symmetric functions reads
\begin{equation}\label{eq:pieri}
 p_1P_\mu=\sum_{\lambda:\,\mu\nearrow\lambda}
 w_{q,t}(\mu,\lambda)P_\lambda,
\end{equation}
where for  $\lambda/\mu=\square=(r,c)$ the coefficient is
\begin{equation}\label{eq:edge-formula}
 w_{q,t}\bigl(\mu,\mu+(r,c)\bigr)
 =\prod_{i=1}^{r-1}G(\mu_i-c,r-1-i),\qquad 
 G(a,l)=\frac{(1-q^at^{l+2})(1-q^{a+1}t^l)}
                  {(1-q^at^{l+1})(1-q^{a+1}t^{l+1})}.
\end{equation}
All these coefficients are strictly positive for $0\le q,t<1$. Wherever it does not lead to confusion, we write $w=w_{q,t}$. The skew Macdonald symmetric functions are characterized by the branching identity
\begin{equation}\label{eq:branching}
 P_\lambda(X,Y)=\sum_{\mu\subseteq\lambda}
 P_\mu(X)P_{\lambda/\mu}(Y),
\end{equation}
where $X$ and $Y$ are two alphabets of variables and $(X,Y)$ is their union.
The skew functions satisfy the relation
\begin{equation}\label{eq:transitivity}
 P_{\lambda/\mu}(X,Y)=
 \sum_{\mu\subseteq\nu\subseteq\lambda}
 P_{\nu/\mu}(X)P_{\lambda/\nu}(Y).
\end{equation}
For a finite alphabet, the combinatorial formula reads
\begin{equation}\label{eq:tableau}
 P_{\lambda/\mu}(x_1,\ldots,x_d)
 =\sum_T\psi_T x^{\wt(T)},
 \qquad
 \psi_T=\prod_{j=1}^d\psi_{\lambda^{(j)}/\lambda^{(j-1)}},
\end{equation}
where the sum is over semistandard Young tableaux of shape $\lambda/\mu$ with entries
in $\{1,\ldots,d\}$: these are fillings of boxes of $\lambda/\mu$ in which rows are weakly increasing and columns are strictly
increasing. The vector $\wt(T)$ records the number of occurrences of each
entry, and
\[
 \mu=\lambda^{(0)}\subseteq\lambda^{(1)}\subseteq\cdots
 \subseteq\lambda^{(d)}=\lambda,
\]
where $\lambda^{(m)}$ is the union of $\mu$ and all the boxes of $\lambda/\mu$ with entries $\{1,2,\dots,m\}$. 
Each skew diagram $\lambda^{(m)}/\lambda^{(m-1)}$ is a
horizontal strip, and the corresponding weight $\psi$ is computed as 
\begin{equation}\label{eq:strip-weight}
 \psi_{\xi/\zeta}
 =\prod_{s\in R_{\xi/\zeta}\setminus C_{\xi/\zeta}}
       \frac{b_\zeta(s)}{b_\xi(s)},
\end{equation}
where $R_{\xi/\zeta}$ and $C_{\xi/\zeta}$ are the sets of boxes of $\xi$
in rows of the strip and columns of the strip, respectively. In particular, \eqref{eq:tableau} implies that in  one variable, the skew function $P_{\lambda/\mu}(a)$ vanishes unless the skew
shape $\lambda/\mu$ is a horizontal strip. We also use
\begin{equation}\label{eq:skew-Q}
 Q_{\lambda/\mu}=\frac{b_\lambda}{b_\mu}P_{\lambda/\mu}.
\end{equation}

\subsection{Nonnegative specializations}\label{sec:specializations}

We identify a Macdonald-nonnegative specialization $\rho:\Lambda\mapsto \mathbb R$ with its parameter triple
$(\alpha;\beta;\gamma)$ and write
$f(\alpha;\beta;\gamma)=f(\rho)$ for $f\in\Lambda$.
Here $\alpha=(\alpha_1\ge \alpha_2\ge\dots\ge 0)$ and $\beta=(\beta_1\ge \beta_2\ge \dots\ge 0)$ are summable and $\gamma\ge0$. In general, we \emph{do not} assume normalization and $ \sum_{i=1}^{\infty}\alpha_i+\sum_{j=1}^{\infty}\beta_j+\gamma$ can be any nonnegative real.
In this situation, the $\rho$--image of a power sum is given by the same formula
\eqref{eq_power_sums_specialization} for $r\ge2$, while
\begin{equation}\label{eq_specialization_mass}
 p_1(\rho)=\sum_i\alpha_i+\sum_j\beta_j+\gamma.
\end{equation}
Kerov-Matveev theorem \cite[Theorem 1.4]{Mat19} states that the triples $(\alpha;\beta;\gamma)$ above
exhaust the Macdonald-nonnegative specializations. These specializations also satisfy enhanced positivity involving skew functions, see \cite[Section 2.2.1]{BC14}: for any $\lambda$ and $\mu$,
\begin{equation}
\label{eq_enhanced_positivity}
 P_{\lambda/\mu}(\alpha;\beta;\gamma)\ge 0, \qquad Q_{\lambda/\mu}(\alpha;\beta;\gamma)\ge 0.
\end{equation}
If $(\widetilde\alpha;\widetilde\beta;\widetilde\gamma)$ denotes the
convention in \cite{BC14,BP15,Mat19}, then
\begin{equation}\label{eq:parameter-conversion}
 \alpha_i=\widetilde\alpha_i,\qquad
 \beta_j=\theta^{-1}\widetilde\beta_j,\qquad
 \gamma=\theta^{-1}\widetilde\gamma.
\end{equation}

A finite alphabet $x=(x_1,\ldots,x_d)$ gives the specialization
$(x_1,\ldots,x_d;0;0)$, and we retain the usual polynomial notation
$f(x)=f(x_1,\ldots,x_d)=f(x_1,\ldots,x_d;0;0)$. For $a\ge0$, scaling and union act on triples by
\begin{equation}\label{eq:specialization-operations}
 a\cdot(\alpha;\beta;\gamma)=(a\alpha;a\beta;a\gamma),\qquad
 (\alpha;\beta;\gamma)\sqcup(\widehat\alpha;\widehat\beta;\widehat\gamma)
 =(\alpha,\widehat\alpha;\, \beta,\widehat\beta;\, \gamma+\widehat\gamma),
\end{equation}
where after taking the union the parameters are rearranged in non-increasing order.
Thus $f(a\cdot\rho)=a^{\deg f}f(\rho)$ for homogeneous $f$.
The union satisfies $p_r(\sigma\sqcup\tau)=p_r(\sigma)+p_r(\tau)$.

\subsection{Coherence}\label{Section_coherence}

The Pieri coefficients $w_{q,t}(\mu,\lambda)$ in \eqref{eq:pieri}
define a natural coupling of the measures $M_n^\rho$ of \eqref{eq_coherent_measure} over all $n$.
Iterating the Pieri rule gives the path interpretation
\begin{equation}\label{eq:path-dimension}
 \ddim(\lambda)=
 \sum_{\varnothing=\lambda^{(0)}\nearrow\cdots\nearrow\lambda^{(n)}=\lambda}
 \prod_{r=1}^nw(\lambda^{(r-1)},\lambda^{(r)}).
\end{equation}
The chains over which the summation goes can be identified with standard Young tableaux of shape $\lambda$, i.e.\ fillings of the boxes of $\lambda$ with numbers $1,2,\dots,n$ with no repetitions and with entries increasing along the rows and columns: entry $m$ in a box means that this box is added at step $m$ in the chain. When $q=t$, all edge weights equal one, and we get the usual
branching graph for irreducible representations of symmetric groups, and \eqref{eq:path-dimension} identifies $\ddim$ with the total number of standard Young tableaux of shape $\lambda$.

For normalized $\rho$, assign each finite path
$\varnothing=\lambda^{(0)}\nearrow\cdots\nearrow\lambda^{(n)}$ the
cylinder probability
\begin{equation}\label{eq:cylinder}
 P_{\lambda^{(n)}}(\rho)
 \prod_{r=1}^nw(\lambda^{(r-1)},\lambda^{(r)}).
\end{equation}
The Pieri rule and $p_1(\rho)=1$ imply consistency as $n$ varies: the projection of \eqref{eq:cylinder} onto $\lambda^{(r)}$ has the law $M_r^\rho$ for each $1\le r \le n$.
These probabilities therefore define a measure $\Prob^\rho$ on infinite
paths. The marginal of the diagram at level $n$ is $M_n^\rho$ by \eqref{eq:path-dimension}.
This is the process of growing Young diagrams mentioned after Theorem~\ref{Theorem_uniform}.

\subsection{Transposition}
\label{Section_transposition}

The automorphism $\omega_{q,t}$ of $\Lambda$ is defined on the generators $p_r$ through
\begin{equation}\label{eq:omega}
 \omega_{q,t}(p_r)=(-1)^{r-1}\frac{1-q^r}{1-t^r}p_r
\end{equation}
has inverse $\omega_{t,q}$ and satisfies
\begin{equation}\label{eq:duality}
 \begin{split}
 \omega_{q,t}P_{\lambda/\mu}(\cdot;q,t)
 &=Q_{\lambda'/\mu'}(\cdot;t,q),\\
 \omega_{q,t}Q_{\lambda/\mu}(\cdot;q,t)
 &=P_{\lambda'/\mu'}(\cdot;t,q).
 \end{split}
\end{equation}
The normalization constants obey
$b_\lambda(q,t)b_{\lambda'}(t,q)=1$.

\begin{lemma}\label{lem:transposition}
Let $\rho=(\alpha;\beta;\gamma)$ be a normalized Macdonald-nonnegative
specialization at $(q,t)$, and set
\begin{equation}\label{eq:rho-dual}
 \rho^\vee=\theta^{-1}\cdot(\rho\circ\omega_{t,q}).
\end{equation}
Then $\rho^\vee=(\beta;\alpha;\gamma)$ is a normalized Macdonald-nonnegative
specialization at $(t,q)$, and
\begin{equation}\label{eq:transpose-measure}
 M_{n;q,t}^\rho(\lambda)=M_{n;t,q}^{\rho^\vee}(\lambda').
\end{equation}
\end{lemma}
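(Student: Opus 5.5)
We compute $\rho^\vee$ on power sums, then use the duality \eqref{eq:duality} to get positivity and the identity of measures. For $r\ge1$, \eqref{eq:omega} with $(q,t)$ swapped gives $\omega_{t,q}(p_r)=(-1)^{r-1}\frac{1-t^r}{1-q^r}p_r$. Hence
\[
p_r(\rho^\vee)=\theta^{-r}(-1)^{r-1}\frac{1-t^r}{1-q^r}\,p_r(\rho),
\]
where the factor $\theta^{-r}$ comes from the scaling in \eqref{eq:specialization-operations} applied to the degree-$r$ function $p_r$.

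For $r=1$ this gives $p_1(\rho^\vee)=\theta^{-1}\cdot\frac{1-t}{1-q}\cdot 1=1$, so $\rho^\vee$ is normalized. For $r\ge2$, substitute \eqref{eq_power_sums_specialization}. The $\alpha$-term becomes $(-1)^{r-1}\theta^{-r}\frac{1-t^r}{1-q^r}\sum_i\alpha_i^r$. This is exactly the dual-parameter term of \eqref{eq_power_sums_specialization} at parameters $(t,q)$: there the role of $\theta$ is played by $\theta^{-1}=\frac{1-q}{1-t}$, and the factor $\frac{1-q^r}{1-t^r}$ becomes $\frac{1-t^r}{1-q^r}$. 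The $\beta$-term becomes $(-1)^{2(r-1)}\theta^{r}\theta^{-r}\frac{1-q^r}{1-t^r}\frac{1-t^r}{1-q^r}\sum_j\beta_j^r=\sum_j\beta_j^r$. Thus $p_r(\rho^\vee)$ agrees with the $(t,q)$ formula for the triple $(\beta;\alpha;\gamma)$. Since the power sums generate $\Lambda$, $\rho^\vee=(\beta;\alpha;\gamma)$ in the $(t,q)$ convention. This is the only delicate point: one must check that the $\gamma$-contribution is consistent with the same convention, which holds because $\gamma$ enters only through $p_1$.

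For nonnegativity, apply \eqref{eq:duality} with $(t,q)$ in place of $(q,t)$: $\omega_{t,q}P_{\lambda'}(\cdot;t,q)=Q_{\lambda}(\cdot;q,t)$. Since $P_{\lambda'}$ is homogeneous of degree $n=|\lambda|$,
\[
P_{\lambda'}(\rho^\vee;t,q)=\theta^{-n}\,Q_\lambda(\rho;q,t)=\theta^{-n}b_\lambda(q,t)P_\lambda(\rho;q,t)\ge0,
\]
so $\rho^\vee$ is Macdonald-nonnegative at $(t,q)$. Alternatively this follows from Kerov–Matveev, since $(\beta;\alpha;\gamma)$ is an admissible triple.

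For \eqref{eq:transpose-measure}, we first relate the dimensions. Since $\omega_{q,t}p_1=p_1$, \eqref{eq:dimension-pairing} gives
\[
\dim_{t,q}(\lambda')=\langle p_1^n,Q_{\lambda'}(\cdot;t,q)\rangle_{t,q}.
\]
We use that $\omega_{q,t}$ is an isometry up to the relation $\langle\omega_{q,t}f,\omega_{q,t}g\rangle_{t,q}=\langle f,g\rangle_{q,t}$. This is checked on $p_\lambda$ via \eqref{eq:scalar-product}: the factor $\prod\frac{1-q^{\lambda_i}}{1-t^{\lambda_i}}$ squared against $\prod\frac{1-t^{\lambda_i}}{1-q^{\lambda_i}}$ returns the $(q,t)$ weight, and the signs square to $1$. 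Together with $Q_{\lambda'}(\cdot;t,q)=\omega_{q,t}P_\lambda(\cdot;q,t)$, this gives
\[
\dim_{t,q}(\lambda')=\langle p_1^n,P_\lambda\rangle_{q,t}=b_\lambda^{-1}\ddim(\lambda).
\]
Multiplying the two displays yields $M^{\rho^\vee}_{n;t,q}(\lambda')=\theta^{-n}\ddim(\lambda)P_\lambda(\rho)$.

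The factor $\theta^{-n}$ must cancel, and this is the main obstacle: it comes from the scalar-product normalization $\langle p_1,p_1\rangle_{q,t}=\theta^{-1}$ versus $\langle p_1,p_1\rangle_{t,q}=\theta$. I expect the resolution to be that the correct isometry relation is $\langle\omega f,\omega g\rangle_{t,q}=\langle f,g\rangle_{q,t}$ paired so that $p_1^n$ contributes $\theta^{n}$. I would recheck it on $n=1$, where both measures are trivially $1$, and fix the bookkeeping accordingly. As a fallback, both sides are probability measures on $\Y_n$ that are proportional with a $\lambda$-independent constant, so the constant must be $1$.
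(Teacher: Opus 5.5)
Your identification of $\rho^\vee$ via power sums and your nonnegativity argument $P_{\lambda'}(\rho^\vee;t,q)=\theta^{-n}b_\lambda(q,t)P_\lambda(\rho;q,t)\ge0$ are correct and match the paper. The gap is in the dimension step, where you assert $\omega_{q,t}p_1=p_1$. This is false. By \eqref{eq:omega}, $\omega_{q,t}p_1=\frac{1-q}{1-t}\,p_1=\theta^{-1}p_1$; you used the inverse fact $\omega_{t,q}p_1=\theta p_1$ a few lines earlier to get $p_1(\rho^\vee)=1$. Hence $p_1^n=\theta^n\,\omega_{q,t}(p_1^n)$. Your isometry $\langle\omega_{q,t}f,\omega_{q,t}g\rangle_{t,q}=\langle f,g\rangle_{q,t}$ is correctly verified, and with the right scalar it gives
\[
\tdim(\lambda')=\bigl\langle \omega_{q,t}(\theta^n p_1^n),\,\omega_{q,t}P_\lambda\bigr\rangle_{t,q}=\theta^n\langle p_1^n,P_\lambda\rangle_{q,t}=\theta^n\, b_\lambda^{-1}\ddim(\lambda),
\]
not $b_\lambda^{-1}\ddim(\lambda)$. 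This is exactly the paper's identity $\ddim(\lambda)b_{\lambda'}(t,q)=\theta^{-n}\tdim(\lambda')$. The paper reaches it by applying $\omega_{q,t}$ to \eqref{eq_dimension_expansion} and comparing coefficients of $P_{\lambda'}(\cdot;t,q)$. The factor $\theta^n$ then cancels the $\theta^{-n}$ in $P_{\lambda'}(\rho^\vee;t,q)$. The $n=1$ check you proposed would have caught this: $\tdim((1))=1$, whereas your formula gives $b_{(1)}(q,t)^{-1}=\theta^{-1}$.

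So your diagnosis of the ``obstacle'' is wrong. The normalizations $\langle p_1,p_1\rangle_{q,t}=\theta^{-1}$ and $\langle p_1,p_1\rangle_{t,q}=\theta$ are fully consistent with the isometry; the missing factor comes only from $\omega_{q,t}$ rescaling $p_1$. Your fallback does not repair the argument as written either. Applied literally to the constant $\theta^{-n}$ you derived, it yields $\theta^{-n}=1$, which is a contradiction exposing the error, not a proof. The fallback becomes valid only if you justify proportionality independently of the wrong scalar. For instance, note that $\omega_{q,t}(p_1^n)$ is \emph{some} nonzero multiple of $p_1^n$, so $\tdim(\lambda')\,b_\lambda/\ddim(\lambda)$ depends only on $n$. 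Then use that both $M^\rho_{n;q,t}$ and $\lambda\mapsto M^{\rho^\vee}_{n;t,q}(\lambda')$ are probability measures on $\Y_n$. As written, \eqref{eq:transpose-measure} is not established. With the correct value of $\omega_{q,t}p_1$ the proof closes immediately and is essentially the paper's.
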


\begin{proof}
Duality implies
\[
 P_\nu(\rho^\vee;t,q)
 =\theta^{-|\nu|}Q_{\nu'}(\rho;q,t)\ge0,
 \qquad p_1(\rho^\vee)=1.
\]
For $r\ge2$, direct substitution in \eqref{eq_power_sums_specialization} gives
\[
 p_r(\rho^\vee)=\sum_j\beta_j^r+
 (-1)^{r-1}\theta^{-r}\frac{1-t^r}{1-q^r}\sum_i\alpha_i^r.
\]
Since the value of $\theta$ for the pair $(t,q)$ is $\theta^{-1}$,
these power sums identify $\rho^\vee$ with $(\beta;\alpha;\gamma)$.

Apply $\omega_{q,t}$ to \eqref{eq_dimension_expansion} and use
\eqref{eq:duality}. Comparing coefficients of $P_{\lambda'}(\cdot;t,q)$
yields
\[
 \ddim(\lambda)b_{\lambda'}(t,q)=\theta^{-n}\tdim(\lambda').
\]
On the other hand,
\[
 P_{\lambda'}(\rho^\vee;t,q)
 =\theta^{-n}b_\lambda(q,t)P_\lambda(\rho;q,t).
\]
The two identities and $b_\lambda(q,t)b_{\lambda'}(t,q)=1$ give
\eqref{eq:transpose-measure}.
\end{proof}

\section{Binomial coupling of two specializations}\label{Section_binomial_coupling}

We explain how the measures \eqref{eq_coherent_measure} for two different specializations can be coupled together.

\begin{proposition}\label{Proposition_binomial}
Let $\rho=\sigma\sqcup\tau$, where $\sigma,\tau$ are Macdonald-nonnegative
specializations with $p_1(\sigma)=p$ and $p_1(\tau)=1-p$, $0\le p\le 1$.
Consider a distribution on pairs of Young diagrams $\mu\subseteq\lambda\in\Y_n$:
\begin{equation}\label{eq:binomial-joint}
 \Prob_n(\mu,\lambda)
 =\ddim(\lambda)P_\mu(\sigma)P_{\lambda/\mu}(\tau), \qquad \mu\subseteq\lambda, \qquad \lambda\in \Y_n.
\end{equation}
Then \eqref{eq:binomial-joint} is a probability distribution, whose $\lambda$-marginal is $M_n^\rho$, and $|\mu|\sim\Bin(n,p)$.
If $p>0$, then conditionally on $|\mu|=m$, $0\le m \le n$, we have
\begin{equation}\label{eq:binomial-conditional}
 \mu\sim M_m^{\bar\sigma},\qquad \bar\sigma=p^{-1}\cdot\sigma.
\end{equation}
\end{proposition}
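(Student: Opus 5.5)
Nonnegativity of \eqref{eq:binomial-joint} is immediate: $\ddim(\lambda)>0$ by \eqref{eq:path-dimension}, $P_\mu(\sigma)\ge0$ by Macdonald-nonnegativity of $\sigma$, and $P_{\lambda/\mu}(\tau)\ge0$ by the enhanced positivity \eqref{eq_enhanced_positivity}. The plan is then to compute two marginals by applying the branching identity \eqref{eq:branching} at the level of specializations and the dimension expansion \eqref{eq_dimension_expansion} to each piece separately.

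\textbf{The $\lambda$-marginal.} For fixed $\lambda$, summing over $\mu\subseteq\lambda$ gives $\sum_\mu P_\mu(\sigma)P_{\lambda/\mu}(\tau)=P_\lambda(\sigma\sqcup\tau)=P_\lambda(\rho)$. To justify this I will note that \eqref{eq:branching} is an identity in $\Lambda\otimes\Lambda$: the coproduct $\Delta P_\lambda=\sum_\mu P_\mu\otimes P_{\lambda/\mu}$, and $\Delta p_r=p_r\otimes1+1\otimes p_r$. Since $p_r(\sigma\sqcup\tau)=p_r(\sigma)+p_r(\tau)$, the specialization $\sigma\sqcup\tau$ equals $(\sigma\otimes\tau)\circ\Delta$ on generators, hence on all of $\Lambda$. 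The $\lambda$-marginal is therefore $\ddim(\lambda)P_\lambda(\rho)=M_n^\rho(\lambda)$, and its total mass is $p_1(\rho)^n=1$, so \eqref{eq:binomial-joint} is a probability distribution.

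\textbf{The $\mu$-marginal and the binomial law.} Fix $\mu$ with $|\mu|=m$. We need
\[
\sum_{\lambda\in\Y_n,\,\lambda\supseteq\mu}\ddim(\lambda)P_{\lambda/\mu}(\tau).
\]
We write $\ddim(\lambda)=\langle p_1^n,Q_\lambda\rangle$ by \eqref{eq:dimension-pairing}. The key identity is the skew Cauchy/adjointness relation: $P_{\lambda/\mu}$ pairs with $Q$'s via $\langle P_{\lambda/\mu}, Q_\nu\rangle$ versus $\langle P_\lambda, Q_\mu Q_\nu\rangle$. Equivalently, I would use the expansion
\[
\sum_{\lambda}Q_\lambda\,P_{\lambda/\mu}(Y)=Q_\mu\cdot\Pi(\cdot;Y),
\]
where $\Pi$ is the Cauchy kernel. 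Alternatively, and more concretely, compute the generating identity $p_1(X,Y)^n=\sum_\lambda\ddim(\lambda)P_\lambda(X,Y)=\sum_{\lambda,\mu}\ddim(\lambda)P_\mu(X)P_{\lambda/\mu}(Y)$. Expanding the left side binomially gives
\[
\sum_m\binom nm p_1(X)^m p_1(Y)^{n-m}=\sum_m\binom nm\sum_{|\mu|=m}\ddim(\mu)P_\mu(X)\,p_1(Y)^{n-m}.
\]
Comparing coefficients of $P_\mu(X)$ (the $P_\mu$ are linearly independent in $\Lambda(X)$, and the identity holds in $\Lambda(X)\otimes\Lambda(Y)$) yields the key identity
\[
\sum_{\lambda\supseteq\mu,\,|\lambda|=n}\ddim(\lambda)P_{\lambda/\mu}=\binom nm\ddim(\mu)\,p_1^{\,n-m}
\]
in $\Lambda(Y)$. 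This is the skew version of \eqref{eq_expectation_identity} promised in the overview. Applying $\tau$ gives $\Prob_n(\mu)=\binom nm\ddim(\mu)P_\mu(\sigma)(1-p)^{n-m}$. Using homogeneity, $P_\mu(\sigma)=p^mP_\mu(\bar\sigma)$, so
\[
\Prob_n(\mu)=\binom nm p^m(1-p)^{n-m}M_m^{\bar\sigma}(\mu).
\]
Here $\bar\sigma$ is normalized and Macdonald-nonnegative (a positive scaling of $\sigma$), so $M_m^{\bar\sigma}$ is a probability measure. Summing over $|\mu|=m$ gives $|\mu|\sim\Bin(n,p)$, and dividing gives \eqref{eq:binomial-conditional}.

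\textbf{Main obstacle.} The only delicate point is rigor in extracting coefficients: working in the two-alphabet algebra $\Lambda\otimes\Lambda$, using linear independence of $\{P_\mu\}$ in the first factor, and then specializing the second factor by $\tau$, which is legitimate because specializations are algebra homomorphisms. The edge case $p=0$ is harmless because $P_\mu(\sigma)=0$ for $\mu\neq\varnothing$ when $p_1(\sigma)=0$, which follows from the formula above.
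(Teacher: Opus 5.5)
Your proposal is correct and follows essentially the same route as the paper. Nonnegativity comes from enhanced positivity, and the $\lambda$-marginal from the branching identity specialized at $\sigma$ and $\tau$. The $\mu$-marginal comes from the identity $\sum_{\lambda\in\Y_n}\ddim(\lambda)P_{\lambda/\mu}=\binom{n}{m}\ddim(\mu)p_1^{n-m}$, which you obtain by expanding $p_1(X,Y)^n$ in two ways, comparing coefficients of $P_\mu$, and then specializing at $\tau$. Your explicit use of homogeneity $P_\mu(\sigma)=p^mP_\mu(\bar\sigma)$ and your separate treatment of $p=0$ just spell out what the paper does implicitly when it divides the $\mu$-marginal by the binomial weights.
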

\begin{proof}
 By \eqref{eq_enhanced_positivity}, the weight \eqref{eq:binomial-joint} is nonnegative. Summing over $\mu$, using \eqref{eq:branching} in which $X$--variables are specialized using $\sigma$ and $Y$--variables are specialized using $\tau$, we conclude that the $\lambda$--marginal of \eqref{eq:binomial-joint} is $M_n^\rho$. Since $M_n^\rho$ is a probability measure, this also implies that the weights \eqref{eq:binomial-joint} sum up to $1$.
 
 In order to understand the marginal distribution of $\mu$, we use the following identity in $\Lambda$: for any $\mu\in\Y_m$ and $n\ge m$ we have
 \begin{equation}\label{eq:skew-sum}
 \sum_{\lambda\in\Y_n}\ddim(\lambda)P_{\lambda/\mu}
 ={n\choose m}\ddim(\mu)p_1^{n-m}.
 \end{equation}
 This identity is obtained by expanding $p_1(X,Y)^n$ in two ways. On the one hand, \eqref{eq_dimension_expansion} and \eqref{eq:branching} give
 \begin{equation}
 \label{eq_x1}
 p_1(X,Y)^n=\sum_{\lambda\in\Y_n}\ddim(\lambda)P_\lambda(X,Y)=\sum_{\mu\subseteq\lambda\in\Y_n} \ddim(\lambda) P_{\lambda/\mu}(X) P_\mu(Y).
 \end{equation}
 On the other hand, using the binomial theorem and then \eqref{eq_dimension_expansion} gives
 \begin{equation}
 \label{eq_x2}
  p_1(X,Y)^n=\sum_{m=0}^n {n\choose m} p_1(Y)^m p_1(X)^{n-m}= \sum_{m=0}^n  {n\choose m} \sum_{\mu\in \Y_m} \ddim(\mu)P_\mu(Y) p_1(X)^{n-m}.
 \end{equation}
 Comparing the coefficient of $P_\mu(Y)$ in \eqref{eq_x1} and \eqref{eq_x2}, we get \eqref{eq:skew-sum}. Next, applying the specialization $\tau$ to \eqref{eq:skew-sum} and multiplying by $P_\mu(\sigma)$, we conclude that the marginal distribution of $\mu$ in \eqref{eq:binomial-joint} has the weight
 \begin{equation}
 \label{eq_x3}
  \mathrm{Prob}(\mu)= {n\choose m}  (1-p)^{n-m} \ddim(\mu) P_\mu(\sigma), \qquad \mu\in \Y_m, \qquad 0\le m \le n.
 \end{equation}
 Summing \eqref{eq_x3} over $\mu\in \Y_m$ with fixed $m$ using \eqref{eq_dimension_expansion} gives the desired binomial law for $m$:
 \begin{equation}
 \label{eq_x4}
  \mathrm{Prob}(m)= {n\choose m}  (1-p)^{n-m} p^m, \qquad 0\le m \le n.
 \end{equation}
 Dividing \eqref{eq_x3} by \eqref{eq_x4} we get the conditional distribution of $\mu\in\Y_m$.
\end{proof}

When working with binomial distributions, we use Hoeffding's inequality, which can be proven directly using Stirling's formula or from \cite[Theorem~1]{Hoe63}. If $X\sim\Bin(n,p)$,
$n\ge1$, and $0\le p\le1$, then, for every $\varepsilon>0$,
\begin{equation}\label{eq:binomial-tail}
 \mathrm{Prob} \bigl[X\ge(p+\varepsilon)n\bigr]\le e^{-2\varepsilon^2n},\qquad
 \mathrm{Prob} \bigl[X\le(p-\varepsilon)n\bigr]\le e^{-2\varepsilon^2n}.
\end{equation}

The coupling also gives a skew version of the expectation identity
\eqref{eq_expectation_identity}.

\begin{proposition}
Consider the joint distribution \eqref{eq:binomial-joint} with an arbitrary Macdonald-nonnegative specialization $\sigma$ and
$\tau=(y_1,\ldots,y_d;0;0)$, where $y_1\ge y_2\ge \dots \ge y_d>0$, $p=p_1(\sigma)$, and $p+\sum_{j=1}^dy_j=1$.
Then for every $x_1\ge \dots\ge x_d>0$, we have
\begin{equation}\label{eq:skew-tilted-expectation}
 \E_{\Prob_n}\left[
   \frac{P_{\lambda/\mu}(x_1,\dots,x_d)}{P_{\lambda/\mu}(y_1,\dots,y_d)}
 \right]
 =\left(p+\sum_{j=1}^dx_j\right)^n.
\end{equation}
\end{proposition}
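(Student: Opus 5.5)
Write out the expectation directly from the definition \eqref{eq:binomial-joint}. With $\tau=(y_1,\ldots,y_d;0;0)$ we have $P_{\lambda/\mu}(\tau)=P_{\lambda/\mu}(y_1,\dots,y_d)$. The weight \eqref{eq:binomial-joint} therefore contains exactly the denominator of the ratio in \eqref{eq:skew-tilted-expectation}, and it cancels:
\[
 \E_{\Prob_n}\left[\frac{P_{\lambda/\mu}(x)}{P_{\lambda/\mu}(y)}\right]
 =\sum_{\mu\subseteq\lambda\in\Y_n}\ddim(\lambda)\,P_\mu(\sigma)\,P_{\lambda/\mu}(x_1,\dots,x_d).
\]
The cancellation needs care where $P_{\lambda/\mu}(y)=0$. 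By the tableau formula \eqref{eq:tableau}, all coefficients $\psi_T$ are positive and all $y_j>0$. Hence $P_{\lambda/\mu}(y)>0$ exactly when a semistandard tableau of shape $\lambda/\mu$ with entries in $\{1,\dots,d\}$ exists, i.e. when each column of $\lambda/\mu$ has at most $d$ boxes. The same criterion governs $P_{\lambda/\mu}(x)$, since all $x_j>0$. So pairs with zero probability contribute zero to the right-hand side as well, and the ratio is well defined $\Prob_n$-almost surely.

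Next, evaluate the double sum by summing over $\lambda$ first, using \eqref{eq:skew-sum}. For fixed $\mu\in\Y_m$,
\[
 \sum_{\lambda\in\Y_n}\ddim(\lambda)P_{\lambda/\mu}(x)=\binom nm\ddim(\mu)\,p_1(x)^{n-m},
\]
where $p_1(x)=\sum_j x_j$. Substituting this in, the expectation becomes
\[
 \sum_{m=0}^n\binom nm\Bigl(\sum_j x_j\Bigr)^{n-m}\sum_{\mu\in\Y_m}\ddim(\mu)P_\mu(\sigma).
\]
Applying $\sigma$ to \eqref{eq_dimension_expansion} gives $\sum_{\mu\in\Y_m}\ddim(\mu)P_\mu(\sigma)=p_1(\sigma)^m=p^m$. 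The binomial theorem then yields $\bigl(p+\sum_j x_j\bigr)^n$.

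The only point requiring attention is the support issue in the first step, namely that $P_{\lambda/\mu}(y)$ and $P_{\lambda/\mu}(x)$ vanish simultaneously. The positivity of the $\psi_T$ and of the variables settles it. Interchanging the finite sums is harmless, since all terms are nonnegative by \eqref{eq_enhanced_positivity}.
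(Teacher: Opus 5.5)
Your proof is correct and is essentially the paper's argument. Both cancel $P_{\lambda/\mu}(y_1,\dots,y_d)$ and evaluate $\sum_{\mu\subseteq\lambda\in\Y_n}\ddim(\lambda)P_\mu(\sigma)P_{\lambda/\mu}(x_1,\dots,x_d)$; the paper sums over $\mu$ first via the branching rule \eqref{eq:branching} to get $\sum_{\lambda}\ddim(\lambda)P_\lambda(\sigma\sqcup(x_1,\dots,x_d;0;0))=p_1(\sigma\sqcup(x_1,\dots,x_d;0;0))^n$, while you sum over $\lambda$ first via \eqref{eq:skew-sum} and finish with the binomial theorem, and \eqref{eq:skew-sum} is itself derived from branching. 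Your explicit check that $P_{\lambda/\mu}(x_1,\dots,x_d)$ and $P_{\lambda/\mu}(y_1,\dots,y_d)$ vanish simultaneously is a detail the paper leaves implicit.
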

\begin{proof}
After substituting the definition \eqref{eq:binomial-joint} and summing using \eqref{eq:branching} and then \eqref{eq_dimension_expansion}, the expectation equals
\begin{multline*}
 \sum_{\lambda\in\Y_n}\ddim(\lambda)
 \sum_{\mu\subseteq\lambda}P_\mu(\sigma)P_{\lambda/\mu}(x_1,\dots,x_d)
 =\sum_{\lambda\in\Y_n}\ddim(\lambda)P_\lambda(\sigma\sqcup(x_1,\dots,x_d;0;0))\\= p_1\bigl(\sigma\sqcup(x_1,\dots,x_d;0;0)\bigr)^n =\left(p+\sum_{j=1}^dx_j\right)^n. \qedhere
\end{multline*}
\end{proof}

\section{Specializations with no $\alpha$ parameters}\label{Section_noalpha}

The goal of this section is to show that if a specialization has all $\alpha_i$ parameters equal to zero, then the rows in the corresponding measures \eqref{eq_coherent_measure} grow sublinearly with $n$.

We begin with two uniform bounds on Macdonald weights \eqref{eq:b} and \eqref{eq:edge-formula}.
\begin{lemma}\label{lem:weights}
There is a constant $C_0=C_0(q,t)$ such that for any $\mu\nearrow\lambda$
\begin{equation}\label{eq:edge-bound}
 e^{-C_0}\le w_{q,t}(\mu,\lambda)\le e^{C_0}.
\end{equation}
There is also a constant $C_1=C_1(q,t)$ such that, for any Young diagram $\lambda$
and any set $D$ of boxes in one row of $\lambda$,
\begin{equation}\label{eq:row-b-bound}
 \sum_{s\in D}|\log b_\lambda(s)|\le C_1.
\end{equation}
\end{lemma}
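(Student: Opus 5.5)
Both bounds come from the same source: the Macdonald weight factors are ratios of quantities of the form $1-q^at^l$, and these are uniformly close to $1$ except on a bounded set of indices. We exploit geometric decay of $q^a t^l$ in one parameter along a row (or along the column above the added box).

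\emph{Edge bound \eqref{eq:edge-bound}.} We use the product formula \eqref{eq:edge-formula}. For $1\le i\le r-1$, set $a_i=\mu_i-c\ge 0$ and $l_i=r-1-i$. Note $a_i\ge0$ because row $r$ gets a box in column $c$, so $\mu_i\ge\mu_r+1=c$ for $i<r$; also $a_i$ is nonincreasing in $i$.

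Write
\[
 \log G(a,l)=\log(1-q^at^{l+2})+\log(1-q^{a+1}t^l)-\log(1-q^at^{l+1})-\log(1-q^{a+1}t^{l+1}).
\]
Each argument has the form $1-x$ with $0\le x<1$. The only possibly dangerous term is $1-q^0t^{l+1}$ when $a=0$, but $l+1\ge1$ gives $x\le t<1$. In general $x\le \max(q,t)^{\,a+l}$, except for $1-q^at^{l+1}$, which is still bounded by $\max(q,t)^{a+l}$ when $a\ge1$ or $l\ge1$.

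Put $\delta=\max(q,t)<1$. Then each term satisfies $|\log(1-x)|\le C x\le C'\delta^{a+l}$, where $C$ depends on $\delta$ through $|\log(1-x)|\le x/(1-\delta)$. The single exceptional case $(a,l)=(0,0)$ gives $G(0,0)=\frac{(1-t^2)(1-q)}{(1-t)(1-qt)}$, which is a fixed positive number.

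Hence
\[
 |\log w|\le \sum_{i=1}^{r-1}C''\delta^{l_i}\le \frac{C''}{1-\delta},
\]
since the $l_i=r-1-i$ are distinct nonnegative integers. This gives a uniform constant $C_0$. The one point to check is that for $q=0$ or $t=0$ the convention $0^0=1$ is handled. In that case the exceptional terms are still bounded away from $0$, because every denominator factor has at least one positive exponent on a parameter that is $<1$.

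\emph{Row bound \eqref{eq:row-b-bound}.} Fix row $i$. For $s=(i,j)$ we have $a=\lambda_i-j$, which takes distinct values $0,1,\dots,\lambda_i-1$ along the row, and $l\ge0$. Then
\[
 \log b_\lambda(s)=\log(1-q^at^{l+1})-\log(1-q^{a+1}t^l).
\]
Both arguments are bounded below by $\min(1-t,1-q)>0$. Indeed, $q^at^{l+1}\le t$ and $q^{a+1}t^l\le q$. Moreover both are at most $\delta^{a}$ times a fixed constant, except that for $a=0$ the first term is $\le t$. Hence $|\log b_\lambda(s)|\le K\delta^{a}$ for a constant $K=K(q,t)$.

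Summing over distinct values of $a$ gives
\[
 \sum_{s\in D}|\log b_\lambda(s)|\le \sum_{a\ge0}K\delta^a=\frac{K}{1-\delta}=:C_1.
\]

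The only mild obstacle is bookkeeping the boundary exponents, namely $a=0$ or $l=0$, so that no factor $1-q^0t^0=0$ appears. In each case the structure of the formulas guarantees a positive exponent on some parameter strictly less than $1$.
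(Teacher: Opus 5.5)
Your proposal is correct and follows essentially the paper's argument: bound each factor via $|\log(1-z)|\le z/(1-u)$ with $u=\max(q,t)$, then sum a geometric series over distinct exponents. You use the distinct $l_i=r-1-i$ where the paper uses the distinct $a_i+l_i$, and the arm lengths along a row for $b_\lambda$. Your separate treatment of $(a,l)=(0,0)$ is unnecessary, since every factor of $G(a,l)$ and of $b_\lambda(s)$ has the form $1-z$ with $z\le u^{a+l+1}\le u$.
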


\begin{proof}
Put $u=\max(q,t)\in[0,1)$. The elementary bound
$|\log(1-z)|\le z/(1-u)$, $0\le z\le u$, gives
\[
 |\log G(a,l)|\le\frac{4}{1-u}u^{a+l+1}.
\]
For the factors in \eqref{eq:edge-formula}, the integers
$(\mu_i-c)+(r-1-i)$, $i<r$, are distinct and nonnegative.
Their geometric sum is at most $u/(1-u)$, proving
\eqref{eq:edge-bound} with $C_0=4u/(1-u)^2$.

Similarly, \eqref{eq:b} gives
\[
 |\log b_\lambda(s)|\le\frac{2}{1-u}u^{a_\lambda(s)+1}.
\]
The arm lengths of boxes in one row are distinct and nonnegative.
Summing the geometric series proves \eqref{eq:row-b-bound}, for example
with $C_1=2/(1-u)^2$.
\end{proof}

We use \eqref{eq:edge-bound}, \eqref{eq:row-b-bound} to bound the first row for the Plancherel specialization $(0;0;1)$.

\begin{lemma}\label{lem:plancherel}
For every $0<\delta\le1$, as $n\to\infty$
\begin{equation}\label{eq:plancherel-tail}
 M_n^{(0;0;1)}\bigl[\lambda_1\ge\delta n\bigr]
 \le \exp\bigl(-\delta n\log n+O_{q,t,\delta}(n)\bigr).
\end{equation}
\end{lemma}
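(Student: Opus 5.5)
We bound each factor of $M_n^{(0;0;1)}(\lambda)=\ddim(\lambda)P_\lambda(0;0;1)$ separately. The plan is to show that both $\ddim(\lambda)$ and $P_\lambda(0;0;1)$ are, up to factors $e^{O(n)}$, comparable to their $q=t$ (Schur/Plancherel) counterparts. Then the Plancherel measure itself is at most $\frac{n!}{\lambda_1!^2}\cdot e^{O(n)}$ on diagrams with large first row, and summing over at most $e^{O(\sqrt n)}$ diagrams finishes the argument.

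\emph{Step 1: bounding $\ddim(\lambda)$.} By \eqref{eq:path-dimension} and \eqref{eq:edge-bound}, every chain has weight at most $e^{C_0 n}$. Hence $\ddim(\lambda)\le e^{C_0 n} f^\lambda$, where $f^\lambda$ is the number of standard Young tableaux of shape $\lambda$. A standard tableau is determined by choosing which $\lambda_1$ of the labels lie in the first row, together with a standard tableau of the remaining shape. This gives
\[
f^\lambda\le \binom{n}{\lambda_1}\sqrt{(n-\lambda_1)!}\,e^{O(n)},
\]
using the bound $f^\nu\le\sqrt{|\nu|!}$.

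\emph{Step 2: bounding $P_\lambda(0;0;1)$.} For the Plancherel specialization, $p_1=1$ and $p_r=0$ for $r\ge2$, so $P_\lambda(0;0;1)$ is the coefficient of $p_1^n/n!$ in $P_\lambda$. Equivalently, by \eqref{eq:dimension-pairing},
\[
P_\lambda(0;0;1)=\frac{\theta^n}{n!}\langle P_\lambda,p_1^n\rangle_{q,t}=\frac{\theta^n\ddim(\lambda)}{n!\,b_\lambda}.
\]
It remains to bound $b_\lambda^{-1}$ by $e^{O(n)}$. By \eqref{eq:row-b-bound}, the factors $b_\lambda(s)$ over the boxes of one row contribute at most $e^{C_1}$. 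That alone only gives $e^{C_1\ell(\lambda)}$, which is still $e^{O(n)}$ since $\ell(\lambda)\le n$. Hence $P_\lambda(0;0;1)\le e^{O(n)}\ddim(\lambda)/n!\le e^{O(n)} f^\lambda/n!$.

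\emph{Step 3: combining.} Multiplying the two bounds,
\[
M_n^{(0;0;1)}(\lambda)\le e^{O(n)}\frac{(f^\lambda)^2}{n!}\le e^{O(n)}\frac{n!}{\lambda_1!^2\,(n-\lambda_1)!}\cdot (n-\lambda_1)! = e^{O(n)}\frac{n!}{\lambda_1!^2}.
\]
For $\lambda_1\ge\delta n$, Stirling's formula gives $n!/\lambda_1!^2\le \exp(n\log n-2\delta n\log n+O(n))$.

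This bound suffices when $\delta\le 1/2$ but fails for $\delta>1/2$. The crude estimate $f^\nu\le\sqrt{|\nu|!}$ is the source of the loss; the sharper bound $f^\nu\le |\nu|!/\prod h$ is not convenient here. To fix it, we instead use $\sum_{\nu\in\Y_k}(f^\nu)^2=k!$ in the sum over all $\lambda$ with $\lambda_1=\ell$. We have
\[
\sum_{\lambda_1=\ell}(f^\lambda)^2\le\binom n\ell^2\sum_{\nu\vdash n-\ell}(f^\nu)^2=\binom n\ell^2(n-\ell)!,
\]
so
\[
\sum_{\lambda_1=\ell} M_n(\lambda)\le e^{O(n)}\frac{n!}{\ell!^2(n-\ell)!}\cdot\frac{(n-\ell)!\,\ell!}{1}\cdot\frac{1}{\ell!}\,.
\]
Simplifying, this is $e^{O(n)}\binom n\ell/\ell!\le e^{O(n)}2^n/\ell!$. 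For $\ell\ge\delta n$ this equals $\exp(-\delta n\log n+O_\delta(n))$, and summing over the at most $n$ values of $\ell$ gives \eqref{eq:plancherel-tail}.

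The main obstacle is Step 2: establishing that $P_\lambda(0;0;1)$ is at most $e^{O(n)}f^\lambda/n!$, i.e.\ controlling $b_\lambda^{-1}$ and $\theta^n$ by $e^{O(n)}$. The factor $\theta^n$ is trivially $e^{O(n)}$. Each box satisfies $|\log b_\lambda(s)|\le \frac{2}{1-u}u$, so $b_\lambda^{-1}\le e^{O(n)}$ holds box-by-box regardless of shape. The identity $P_\lambda(0;0;1)=\theta^n\ddim(\lambda)/(n!\,b_\lambda)$ follows from $P_\lambda=Q_\lambda/b_\lambda$ and the fact that the Plancherel specialization extracts the $p_1^n$ coefficient, $\langle Q_\lambda,p_1^n\rangle=\ddim(\lambda)$, with $\langle p_1^n,p_1^n\rangle=n!\theta^{-n}$. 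Checking this normalization carefully is where I would spend effort.
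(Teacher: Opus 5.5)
Your final argument (the repaired Step~3) is correct and is essentially the paper's proof: deleting the first row costs a factor $\binom{n}{\ell}$ in the dimension and leads to $\sum_{\lambda_1=\ell}M_n^{(0;0;1)}(\lambda)\le e^{O(n)}\binom{n}{\ell}/\ell!$. The only difference is that you first dominate $M_n^{(0;0;1)}(\lambda)$ by $e^{O(n)}(f^\lambda)^2/n!$ and invoke $\sum_{\nu\vdash k}(f^\nu)^2=k!$, whereas the paper deletes the row directly at $(q,t)$, controls $b_\lambda/b_\nu$ by the one-row bound, and uses that $M_{n-\ell}^{(0;0;1)}$ has total mass one.

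Your abandoned first attempt failed only because of an algebra slip, not because of $f^\nu\le\sqrt{|\nu|!}$. In fact $\binom{n}{\lambda_1}^2(n-\lambda_1)!/n!=\binom{n}{\lambda_1}/\lambda_1!$ rather than $n!/\lambda_1!^2$, so the pointwise bound combined with the $e^{O(\sqrt n)}$ count of diagrams already works for every $\delta$. Your claim that the faulty bound suffices for $\delta\le1/2$ is also backwards. The same spurious factor appears in the intermediate display of your fix, though its conclusion $\binom{n}{\ell}/\ell!$ is right.
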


\begin{proof}
For the specialization $\rho=(0;0;1)$, its value $f(\rho)$ on a homogeneous symmetric function of degree $n$ is the coefficient of $p_1^n$ when $f$ is expanded as a linear combination of products of power sums. By \eqref{eq:scalar-product} and \eqref{eq:dimension-pairing}, the
coefficient of $p_1^n$ in the power sum expansion of $Q_\lambda$ is
$\theta^n\ddim(\lambda)/n!$. Hence, \eqref{eq_coherent_measure} becomes
\begin{equation}\label{eq:plancherel-law}
P_\lambda(0;0;1)=\frac{\theta^n\ddim(\lambda)}{n!b_\lambda}\qquad
\text{ and } \qquad
 M_n^{(0;0;1)}(\lambda)=
 \frac{\theta^n\ddim(\lambda)^2}{b_\lambda n!}.
\end{equation}
For $\ddim(\lambda)$ we use the formula \eqref{eq:path-dimension} representing it as a sum over standard Young tableaux.
Suppose $\lambda_1=r$, and write $\nu=(\lambda_2,\lambda_3,\ldots)$.
Deleting the first row of a standard Young tableau $T$ and standardizing the
remaining entries gives a standard tableau $\widehat T$ of shape $\nu$.
The tableau $T$ is uniquely determined by $\widehat T$ and the set of $r$
entries in the first row of $T$. By \eqref{eq:edge-bound}, the weight of $T$ in \eqref{eq:path-dimension} is
at most $e^{2C_0n}$ times that of $\widehat T$. Summing over tableaux gives
\[
 \ddim(\lambda)\le e^{2C_0n}\binom nr\ddim(\nu).
\]
Deleting the first row does not change the arm or leg lengths of the
remaining boxes. Thus only the factors corresponding to the first row of $\lambda$ remain in $b_\lambda/b_\nu$,
and \eqref{eq:row-b-bound} gives
\[
 \left|\log\frac{b_\lambda}{b_\nu}\right|\le C_1.
\]
Substituting these bounds into \eqref{eq:plancherel-law} yields
\begin{equation}\label{eq:plancherel-row-deletion}
 M_n^{(0;0;1)}(\lambda)
 \le e^{C_2n}\frac{\binom nr}{r!}\,M_{n-r}^{(0;0;1)}(\nu),
\end{equation}
where $C_2=C_2(q,t)$ and the factor $\theta^r$ is absorbed into the
exponential bound. We fix $r$ and sum \eqref{eq:plancherel-row-deletion} over
$\lambda$ with $\lambda_1=r$. Equivalently, this is summation of $M_{n-r}^{(0;0;1)}(\nu)$ over $\nu$ with $\nu_1\le r$ and we can use the fact that the total mass of the measure $M_{n-r}^{(0;0;1)}$ is $1$. Hence, for $m=\lceil\delta n\rceil$, bounding $\sum_r {n\choose r}\le 2^n$, we get
\[
 M_n^{(0;0;1)}\bigl[\lambda_1\ge m\bigr]
 \le e^{C_2n}\sum_{r=m}^n\frac{\binom nr}{r!}
 \le e^{C_2n}\frac{2^n}{m!}.
\]
Stirling's formula gives \eqref{eq:plancherel-tail}.
\end{proof}

Using Proposition \ref{Proposition_binomial} we add $\beta$ parameters to the statement of Lemma \ref{lem:plancherel}.

\begin{proposition}\label{Proposition_noalpha_bound}
Let $\rho=(0;\beta;\gamma)$ be a normalized Macdonald-nonnegative specialization. For every
$\delta>0$, there are $C,c>0$ such that
\begin{equation}\label{eq:noalpha}
 M_n^\rho\bigl[\lambda_1>\delta n\bigr]\le Ce^{-cn}.
\end{equation}
\end{proposition}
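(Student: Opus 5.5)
The plan is to peel off the contributions of the $\beta$ parameters and of $\gamma$ separately, using the binomial coupling of Proposition~\ref{Proposition_binomial} twice. Fix $\delta>0$ and choose $K$ such that $\sum_{j>K}\beta_j<\delta/8$. Split $\rho=\sigma\sqcup\tau$ with
\[
\tau=(0;\beta_1,\dots,\beta_K;0),\qquad \sigma=(0;\beta_{K+1},\beta_{K+2},\dots;\gamma).
\]
In the coupling \eqref{eq:binomial-joint}, $\lambda$ has law $M_n^\rho$, and almost surely $P_{\lambda/\mu}(\tau)>0$.

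\emph{Step 1: the finitely many $\beta$'s add at most $K$ boxes per row.} By the duality \eqref{eq:duality}, as in the proof of Lemma~\ref{lem:transposition}, $P_{\lambda/\mu}(\tau;q,t)$ equals a positive constant times $Q_{\lambda'/\mu'}(\beta_1,\dots,\beta_K;t,q)$. By \eqref{eq:skew-Q} and the tableau formula \eqref{eq:tableau}, the latter vanishes unless $\lambda'/\mu'$ has at most $K$ boxes in each column. Equivalently, $\lambda/\mu$ has at most $K$ boxes in each row, and therefore
\[
\lambda_1\le \mu_1+K
\]
almost surely. The edge case $\sigma=0$ is trivial: then $\mu=\varnothing$ and $\lambda_1\le K$.

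\emph{Step 2: split $\sigma$ into its $\beta$-tail and its Plancherel part.} Conditionally on $|\mu|=m$, we have $\mu\sim M_m^{\bar\sigma}$. Split $\bar\sigma=\sigma_1\sqcup\sigma_2$ with
\[
\sigma_1=(0;0;\gamma/p),\qquad \sigma_2=(0;\beta_{>K}/p;0),
\]
and apply Proposition~\ref{Proposition_binomial} again. This gives $\nu\subseteq\mu$ with $|\mu/\nu|\sim\Bin(m,\varepsilon')$ and $|\nu|=m'$, where $\nu\sim M_{m'}^{(0;0;1)}$ conditionally on $m'$ (if $\gamma=0$, then simply $\nu=\varnothing$). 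Here $\varepsilon' p\,m$ has mean at most $(\delta/8)n$. Rather than analyze the structure of $\mu/\nu$, I bound crudely:
\[
\mu_1\le \nu_1+|\mu/\nu|.
\]
Composing the two couplings, $|\mu/\nu|$ is $\Bin(n,\sum_{j>K}\beta_j)$; I would verify this by multiplying the two binomial laws. So by \eqref{eq:binomial-tail}, $\Prob\bigl[|\mu/\nu|>\delta n/4\bigr]\le e^{-cn}$.

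\emph{Step 3: the Plancherel part.} It remains to show that $\Prob[\nu_1>\delta n/2]\le Ce^{-cn}$. If $m'\le \delta n/2$, then $\nu_1\le m'$ and there is nothing to prove. Otherwise $m'\ge \delta n/2$, the threshold $\delta n/2$ is at least $(\delta/2)m'$, and Lemma~\ref{lem:plancherel} with parameter $\delta/2$ gives probability at most
\[
\exp\bigl(-(\delta/2)m'\log m'+O(m')\bigr)\le \exp\bigl(-(\delta^2/4)\,n\log(\delta n/2)+O(n)\bigr),
\]
which is at most $Ce^{-cn}$ uniformly in $m'\in[\delta n/2,n]$. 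Combining the three steps, for $K<\delta n/4$ we get $\lambda_1\le \delta n/2+\delta n/4+K<\delta n$ outside an event of probability at most $Ce^{-cn}$; small $n$ are absorbed into the constant $C$.

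The main obstacle is Step 1: the vanishing criterion for the skew function at a pure-$\beta$ specialization. I would derive it carefully via $\omega_{q,t}$ and the finite-alphabet tableau formula, keeping track of the scaling by $\theta$. A secondary point is making the $O(m')$ error in Lemma~\ref{lem:plancherel} uniform over $m'$ in the range $[\delta n/2,n]$, which holds because that lemma is an asymptotic statement for a fixed $\delta$.
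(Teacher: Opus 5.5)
Your proof is correct and uses the same ingredients as the paper's proof: the binomial coupling of Proposition~\ref{Proposition_binomial}, the transposition/vertical-strip vanishing showing that finitely many $\beta$'s add at most $K$ boxes to the first row, Hoeffding's bound for the $\beta$-tail, and Lemma~\ref{lem:plancherel} for the $\gamma$-part at random size. The only difference is the nesting. You put $\beta_1,\dots,\beta_K$ on the outside and handle them in one step via the $K$-variable tableau formula for $Q_{\lambda'/\mu'}(\cdot;t,q)$, then separate the tail from $\gamma$ with a second coupling plus binomial thinning. The paper instead makes the tail the outer part, so $R\sim\Bin(n,r_L)$ directly, and peels off the large $\beta$'s one at a time as vertical strips.
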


\begin{proof}
It suffices to consider $0<\delta<1$. Choose $L$ such that
\[
 r_L=\sum_{j>L}\beta_j<\delta/8.
\]
Apply Proposition~\ref{Proposition_binomial} with
\[
 \sigma=(0;\beta_1,\ldots,\beta_L;\gamma),\qquad
 \tau=(0;\beta_{L+1},\beta_{L+2},\ldots;0).
\]
It gives a coupling of Young diagrams $\mu\subseteq\lambda$ such that
\[
 R=|\lambda|-|\mu|\sim\Bin(n,r_L).
\]
Conditionally on its size, the law of $\mu$ is \eqref{eq_coherent_measure} associated with the specialization
$(1-r_L)^{-1}\cdot\sigma$.

Apply Proposition~\ref{Proposition_binomial} to the distribution of $\mu$ repeatedly $L$ times to split off the $L$
individual $\beta$ parameters, conditioning on the current size and
normalizing the remaining specialization at each step. This gives a chain of diagrams:
$$
 \mu^{(0)}\subseteq\mu^{(1)}\subseteq\dots \subseteq \mu^{(L)}=\mu\subseteq \lambda.
$$
When $\tau$ is a specialization in a single $\beta$-parameter, by \eqref{eq:duality}, the proof of Lemma \ref{lem:transposition}, and \eqref{eq:tableau}, the skew function $P_{\mu^{(r)}/\mu^{(r-1)}}(\tau)$ vanishes unless $\mu^{(r)}/\mu^{(r-1)}$
is a vertical strip. Therefore, at each step the first row changes by at most $1$. Hence, almost surely in this coupling,
\begin{equation}\label{eq:noalpha-coupling}
 \lambda_1\le\mu^{(L)}_1+R\le\mu^{(0)}_1+R+L.
\end{equation}
If $\gamma=0$, then $\mu^{(0)}=\varnothing$. Otherwise, conditionally on
$|\mu^{(0)}|=m$, its law is $M_m^{(0;0;1)}$. 

For $m<\delta n/2$, the event
$\{\mu^{(0)}_1>\delta n/2\}$ is impossible. For $m\ge\delta n/2$, there is an inclusion
$\{\mu^{(0)}_1>\delta n/2\}\subseteq \{\mu^{(0)}_1>\delta m/2\}$, because $m\le n$. Lemma~\ref{lem:plancherel}
therefore gives
\[
 \mathrm{Prob} \bigl[\mu^{(0)}_1>\delta n/2\bigr]\le Ce^{-cn}.
\]
For all sufficiently large $n$, one has $L\le\delta n/4$.
Then \eqref{eq:noalpha-coupling} implies
\[
 \{\lambda_1>\delta n\}
 \subseteq\{\mu^{(0)}_1>\delta n/2\}\cup\{R>\delta n/4\}.
\]
By the binomial bound \eqref{eq:binomial-tail} and $r_L<\delta/8$, the last event has
exponentially small probability.
\end{proof}

We also record a version of Proposition \ref{Proposition_noalpha_bound} for the inner diagram in Proposition \ref{Proposition_binomial}, whose size is random.

\begin{lemma}\label{lem:random-size}
Let $\sigma=(0;\beta;\gamma)$ be a Macdonald-nonnegative specialization with
$p=p_1(\sigma)$, $0<p<1$. Suppose $\mu$ is the inner diagram in Proposition \ref{Proposition_binomial}. For every
$\delta>0$,
\begin{equation}\label{eq:random-size}
 \Prob_n\bigl[\mu_1>\delta n\bigr]\le Ce^{-cn},
\end{equation}
where the constants $C,c>0$ may depend on $\sigma$ and $\delta$.
\end{lemma}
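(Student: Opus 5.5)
We condition on the size $|\mu|=m$ and then apply Proposition~\ref{Proposition_noalpha_bound} to the normalized specialization. By Proposition~\ref{Proposition_binomial}, $|\mu|\sim\Bin(n,p)$, and conditionally on $|\mu|=m$ the diagram $\mu$ has law $M_m^{\bar\sigma}$ with $\bar\sigma=p^{-1}\cdot\sigma=(0;p^{-1}\beta;p^{-1}\gamma)$. Since scaling preserves Macdonald-nonnegativity (homogeneity, $P_\nu(a\cdot\sigma)=a^{|\nu|}P_\nu(\sigma)$) and $p_1(\bar\sigma)=1$, this $\bar\sigma$ is a normalized Macdonald-nonnegative specialization with no $\alpha$ parameters. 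Proposition~\ref{Proposition_noalpha_bound} therefore applies to it, with constants depending only on $\sigma$ and the chosen threshold.

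Next we split according to $m$:
\[
\Prob_n[\mu_1>\delta n]=\sum_{m=0}^n \binom nm p^m(1-p)^{n-m}\, M_m^{\bar\sigma}[\mu_1>\delta n].
\]
For $m\le \delta n$ the event is empty, because $\mu_1\le|\mu|=m$. For $m>\delta n$ we use $\{\mu_1>\delta n\}\subseteq\{\mu_1>\delta m\}$, valid since $m\le n$. Proposition~\ref{Proposition_noalpha_bound} applied to $\bar\sigma$ with threshold $\delta$ then gives
\[
M_m^{\bar\sigma}[\mu_1>\delta m]\le C'e^{-c'm}\le C'e^{-c'\delta n}.
\]
Summing against the binomial weights, whose total is at most $1$, yields the bound $C'e^{-c'\delta n}$. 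This is \eqref{eq:random-size} with $C=C'$ and $c=c'\delta$.

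There is no real obstacle. The only points to verify are that $\bar\sigma$ satisfies the hypotheses of Proposition~\ref{Proposition_noalpha_bound} (normalized, nonnegative, with $\alpha=0$), and that the small-$m$ regime is trivial because $\mu_1\le m$. That second fact is exactly why the binomial tail estimate \eqref{eq:binomial-tail} is not needed here. The hypothesis $0<p$ is needed to form $\bar\sigma$ via \eqref{eq:binomial-conditional}. The hypothesis $p<1$ is not essential: when $p=1$ we have $\mu=\lambda$, and the statement reduces directly to Proposition~\ref{Proposition_noalpha_bound}.
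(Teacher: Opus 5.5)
Your proposal is correct and follows essentially the same argument as the paper. You condition on $|\mu|=m$ to get the law $M_m^{p^{-1}\cdot\sigma}$, note the event is empty for $m\le\delta n$, use $\{\mu_1>\delta n\}\subseteq\{\mu_1>\delta m\}$ together with Proposition~\ref{Proposition_noalpha_bound} to bound by $Ce^{-cm}\le Ce^{-c\delta n}$, and then average over $m$.
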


\begin{proof} Conditionally on $|\mu|=m$,
the law of $\mu$ is $M_m^{p^{-1}\cdot\sigma}$. The event in \eqref{eq:random-size}
is impossible for $m\le\delta n$. For $m>\delta n$, the event implies
$\mu_1>\delta m$, and Proposition \ref{Proposition_noalpha_bound} bounds its conditional
probability by $Ce^{-cm}\le Ce^{-c\delta n}$. Averaging over $m$ proves
the claim.
\end{proof}

\section{A monomial estimate for skew Macdonald polynomials}
\label{Section_localization_to_monomial}

In this section we approximate the values of skew Macdonald polynomials by their leading monomials. Let us fix $d\ge1$ and a skew diagram $\lambda/\mu$ with at most $d$ boxes in each column. We fill each column of $\lambda/\mu$ from top to bottom with $1,2,\ldots$,
starting at $1$ in every column, and denote the resulting filling by
$T_0$, as in Figure~\ref{fig:canonical-tableau}. Define its weight $\kappa=(\kappa_1,\ldots,\kappa_d)$ by
\begin{equation}\label{eq:kappa}
 \kappa_j=\#\{\text{boxes of }T_0\text{ with entry }j\},
 \qquad 1\le j\le d.
\end{equation}
Equivalently, $\kappa_j$ counts the skew columns containing at least $j$
boxes. In particular, $\kappa$ is a partition of $|\lambda|-|\mu|$.

\begin{figure}[t]
\centering
\begin{tikzpicture}[x=0.44cm,y=0.44cm,font=\small,
  cell/.style={draw=black,line width=0.35pt},
  cut/.style={dash pattern=on 2.5pt off 2pt,line width=0.55pt,
    preaction={draw=white,line width=1.7pt}}]
  \node at (4,1.9) {$T_0$ on $\lambda/\mu$};
  \foreach \row/\length in {1/4,2/3,3/2,4/2,5/1} {
    \foreach \col in {1,...,\length} {
      \fill[black!16] (\col-1,-\row) rectangle (\col,1-\row);
    }
  }
  \foreach \row/\length in {1/8,2/7,3/5,4/4,5/2} {
    \foreach \col in {1,...,\length} {
      \draw[cell] (\col-1,-\row) rectangle (\col,1-\row);
    }
  }
  \foreach \col/\innerheight/\height in
    {1/5/0,2/4/1,3/2/2,4/1/3,5/0/3,6/0/2,7/0/2,8/0/1} {
    \ifnum\height>0
      \foreach \entry in {1,...,\height} {
        \node at (\col-0.5,-\innerheight-\entry+0.5) {$\entry$};
      }
    \fi
    \node[font=\footnotesize] at (\col-0.5,-5.7) {$\height$};
  }
  \node[anchor=east,font=\footnotesize] at (-0.25,-5.7) {height};
  \draw[<->,line width=0.45pt] (0,0.4) -- (4,0.4)
    node[midway,above,inner sep=2pt] {$\mu_1=4$};
  \draw[cut] (4,0.12) -- (4,-5.25);
  \draw[cut] (-0.2,-3) -- (8.2,-3);
  \node[anchor=east,inner sep=2pt] at (-0.25,-3) {$d=3$};

  \draw[->,line width=0.5pt] (9.5,-1.5) -- (12.3,-1.5)
    node[midway,above,inner sep=3pt,font=\footnotesize] {$\mathrm{wt}$};

  \begin{scope}[shift={(14,0)}]
    \node at (3.5,1.9) {$\kappa=(7,5,2)$};
    \foreach \entry/\length in {1/7,2/5,3/2} {
      \foreach \col in {1,...,\length} {
        \draw[cell] (\col-1,-\entry) rectangle (\col,1-\entry);
        \node at (\col-0.5,-\entry+0.5) {$\entry$};
      }
    }
  \end{scope}
\end{tikzpicture}
\caption{Left: tableau $T_0$ for $\lambda=(8,7,5,4,2)$,
$\mu=(4,3,2,2,1)$, and $d=3$. Right: $\kappa=(7,5,2)$ records the multiplicities of the labels $1,2,3$.}
\label{fig:canonical-tableau}
\end{figure}

\begin{lemma} \label{Lemma_S_kappa}
 The row lengths of $\kappa$ given by \eqref{eq:kappa} approximate those of $\lambda$:
\begin{equation}\label{eq:kappa-comparison}
 0\le\lambda_j-\kappa_j\le\mu_1,\qquad 1\le j\le d.
\end{equation}
In particular, the sums \eqref{eq:partial-sums} satisfy $0\le S_k(\lambda)-S_k(\kappa)\le k\mu_1$ for
$1\le k\le d$.
\end{lemma}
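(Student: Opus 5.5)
The plan is to compare $\kappa_j$ and $\lambda_j$ by counting columns. By the equivalent description after \eqref{eq:kappa}, $\kappa_j$ is the number of columns $c$ in which the skew diagram $\lambda/\mu$ has at least $j$ boxes. In column $c$ the skew diagram occupies rows $\mu'_c+1,\dots,\lambda'_c$, so it has $\lambda'_c-\mu'_c$ boxes. Hence
\[
 \kappa_j=\#\{c:\ \lambda'_c-\mu'_c\ge j\},\qquad \lambda_j=\#\{c:\ \lambda'_c\ge j\}.
\]

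\emph{Lower bound.} Since $\mu'_c\ge0$, the condition $\lambda'_c-\mu'_c\ge j$ implies $\lambda'_c\ge j$. So the first set is contained in the second, and $\kappa_j\le\lambda_j$.

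\emph{Upper bound.} Take a column $c$ that is counted in $\lambda_j$ but not in $\kappa_j$. Then $\lambda'_c\ge j$ and $\lambda'_c-\mu'_c<j$. The second inequality forces $\mu'_c>\lambda'_c-j\ge0$, so $\mu'_c\ge1$, which means $c\le\mu_1$. The number of such columns is therefore at most $\mu_1$, giving $\lambda_j-\kappa_j\le\mu_1$. This holds for every $j\ge1$, in particular for $1\le j\le d$.

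Summing these bounds over $j=1,\dots,k$ gives $0\le S_k(\lambda)-S_k(\kappa)\le k\mu_1$.

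The only point requiring care is the step $\lambda'_c-\mu'_c<j\le\lambda'_c\Rightarrow \mu'_c\ge1$. Since it is immediate, I do not expect any real obstacle. The hypothesis of at most $d$ boxes per column is needed only so that $T_0$ uses entries in $\{1,\dots,d\}$ and $\kappa$ has $d$ parts. It is not needed for the inequalities themselves.
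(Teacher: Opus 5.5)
Your proof is correct and is essentially the paper's argument. Both write $\kappa_j$ and $\lambda_j$ as counts of columns of $\lambda/\mu$ and of $\lambda$ of height at least $j$. Containment of the first set in the second gives the lower bound. Any discrepancy must occur in a column with $\mu'_c\ge 1$, i.e.\ $c\le\mu_1$, which gives the upper bound, and summing over $j\le k$ gives the partial-sum bound.
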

\begin{proof}
$\kappa_j$ counts the columns of $\lambda/\mu$ of height
at least $j$, whereas $\lambda_j$ counts the columns of $\lambda$
of height at least $j$. Since columns of $\lambda/\mu$ are subsets of columns of $\lambda$, every column counted by $\kappa_j$ is also counted
by $\lambda_j$, leading to $\lambda_j\ge \kappa_j$. To the right of the column $\mu_1$, the column heights for $\lambda$ and $\lambda/\mu$
coincide, so discrepancies can occur only among the first $\mu_1$
columns. Therefore, also $\lambda_j\le \kappa_j+\mu_1$. This proves \eqref{eq:kappa-comparison}. Summing it over
$j=1,\ldots,k$, we get $0\le S_k(\lambda)-S_k(\kappa)\le k\mu_1$. 
\end{proof}

\begin{theorem}\label{Theorem_monomial_localization}
For each $0\le q,t<1$, $d=1,2,\dots$, there exists $C=C(q,t,d)>0$ such that, for every skew
diagram $\lambda/\mu$ with at most $d$ boxes in each column and every
$x_1\ge\cdots\ge x_d>0$, we have
\begin{equation}\label{eq:localization}
 e^{-C(1+\mu_1)}x^\kappa
 \le P_{\lambda/\mu}(x_1,\ldots,x_d)
 \le e^{C(1+\mu_1)} (1+\lambda_1)^{d(d-1)}x^\kappa,
\end{equation}
where $\kappa$ is defined by \eqref{eq:kappa}.
\end{theorem}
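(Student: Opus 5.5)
We use the tableau formula \eqref{eq:tableau}. Every SSYT weight $\wt(T)$ is dominated by $\kappa$: in any semistandard filling, an entry $\le k$ in a column must lie among that column's top $k$ boxes. Hence the number of entries $\le k$ is at most $\sum_{\text{columns}}\min(k,\text{height})=S_k(\kappa)$, with equality for $T_0$. Since $x_1\ge\cdots\ge x_d>0$ and $\sum_j\wt(T)_j=|\kappa|$, Abel summation gives
\[
x^{\wt(T)}=x_d^{|\kappa|}\prod_{k<d}(x_k/x_{k+1})^{S_k(\wt T)},
\]
and therefore $x^{\wt(T)}\le x^\kappa$ for every $T$. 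The upper bound then follows from two estimates. The first is that every $\psi_T\le e^{C(1+\mu_1)}$. The second is that the number of SSYT of shape $\lambda/\mu$ with entries in $\{1,\dots,d\}$ is at most $e^{C\mu_1}(1+\lambda_1)^{d(d-1)}$. For the lower bound we keep only $T=T_0$ and need $\psi_{T_0}\ge e^{-C(1+\mu_1)}$.

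For the counting bound, a tableau is determined by the chain $\lambda^{(0)}\subseteq\cdots\subseteq\lambda^{(d)}$. The boxes of $\lambda/\mu$ in row $i$ with $i\le d$ carry entries $\le i$, and in row $i>d$ all entries are $\le d$. Outside the first $\mu_1$ columns, however, the columns of $\lambda/\mu$ start at row $1$, so there the filling is forced: entry $=$ row index. So the freedom lies only in the interface between columns $\le\mu_1$ and the rest, and in the horizontal positions of boundaries. I would encode $T$ by the $d(d-1)/2$ numbers $\lambda^{(m)}_i$ for $i<m$ in the top rows, each taking at most $1+\lambda_1$ values, giving a factor $(1+\lambda_1)^{d(d-1)/2}$. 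The lower rows contribute a combinatorial freedom confined to the first $\mu_1$ columns, bounded by $d^{d\mu_1}=e^{C\mu_1}$: each of those at most $d\mu_1$ skew boxes gets an entry among $d$ values. This gives the count within the claimed exponent $d(d-1)$. The same-shape argument must be made rigorous: outside column $\mu_1$ the horizontal strips are interleaved, and I expect the counting to be routine once this picture is set up.

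The main obstacle is the bound on $\psi_T$ uniform in $\lambda_1$, given by \eqref{eq:strip-weight}. Each $\psi_{\xi/\zeta}$ is a product over $s\in R\setminus C$ of $b_\zeta(s)/b_\xi(s)$. Arguing as in Lemma \ref{lem:weights}, for boxes in a fixed row the logarithms $|\log b(s)|$ are controlled by $u^{a(s)+1}$ with $u=\max(q,t)$, so they are summable along the row. Here \eqref{eq:row-b-bound} applies to any set of boxes in one row. Rows in $R_{\xi/\zeta}$ are rows containing strip boxes, and a horizontal strip meets at most $\ell(\xi)$ rows. So $|\log\psi_{\xi/\zeta}|\le 2C_1\cdot\#\{\text{rows of the strip}\}$, which can be large when $\ell(\lambda)\gg d$.

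To repair this, I would bound the number of rows hit by the $m$-th strip outside the $\mu_1$-region. A row $i$ that has a box of the skew shape to the right of column $\mu_1$ satisfies $i\le d$, since columns there have height $\le d$ and start at row $1$. Rows hit only within the first $\mu_1$ columns number at most the number of distinct rows among boxes with column $\le\mu_1$. Naively this number is unbounded, since column $j\le\mu_1$ can have $d$ skew boxes anywhere. A better bound uses that a horizontal strip's rows with a box in columns $\le\mu_1$ have distinct columns, hence number at most $\mu_1$. So each strip hits at most $d+\mu_1$ rows, and over $d$ strips we get $|\log\psi_T|\le 2C_1d(d+\mu_1)=C(1+\mu_1)$. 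This bound is two-sided, so it yields both $\psi_T\le e^{C(1+\mu_1)}$ and $\psi_{T_0}\ge e^{-C(1+\mu_1)}$. It also gives the upper bound, once the count is fixed with a constant $e^{C\mu_1}$, completing \eqref{eq:localization}.
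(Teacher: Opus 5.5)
Your overall structure matches the paper's proof. You bound every monomial by $x^\kappa$, bound every $\psi_T$ two-sidedly by $e^{C(1+\mu_1)}$ through the number of rows met by each horizontal strip, keep only $T_0$ for the lower bound, and count tableaux for the upper bound. Your dominance argument (at most $S_k(\kappa)$ entries $\le k$, then Abel summation) is equivalent to the paper's box-by-box comparison. Your per-strip row count also works: at most $d$ rows meet columns $>\mu_1$, and at most $\mu_1$ rows meet columns $\le\mu_1$ because a horizontal strip occupies distinct columns. It gives the same total $d^2+d\mu_1$ as the paper, so the weight bound and the lower bound are correct.

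The counting paragraph, however, is wrong as written.
\begin{itemize}
\item Your claims that row $i\le d$ carries only entries $\le i$, and that to the right of column $\mu_1$ the filling is forced with entry equal to the row index, are false. For $\lambda=(N)$, $\mu=\varnothing$, $d=2$, every word $1^a2^{N-a}$ is semistandard, which is exactly why $P_{(N)}(x_1,x_2)$ has $N+1$ monomials.
\item In the same example, your coordinates $\lambda^{(m)}_i$ with $i<m$ reduce to $\lambda^{(2)}_1=N$ and determine nothing. The free coordinates of a straight-shape tableau are $\lambda^{(m)}_i$ with $1\le i\le m\le d-1$.
\end{itemize}

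The repair is short, and there are two ways to do it.
\begin{itemize}
\item Follow the paper. Each of the first $d$ rows is a weakly increasing word in $\{1,\dots,d\}$ of length at most $\lambda_1$, fixed by $d-1$ multiplicities in $[0,\lambda_1]$; this gives $(1+\lambda_1)^{d(d-1)}$. The rows below row $d$ contain at most $d\mu_1$ skew boxes, giving $d^{d\mu_1}$.
\item Make your own split precise. In columns $>\mu_1$ the skew shape is a translated straight shape with at most $d$ rows. So the restriction of $T$ there is a straight-shape tableau, determined by a Gelfand--Tsetlin pattern with $d(d-1)/2$ free entries in $[0,\lambda_1]$. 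The columns $\le\mu_1$ contain at most $d\mu_1$ boxes, hence at most $d^{d\mu_1}$ fillings. Ignoring the compatibility constraints between the two parts only overcounts.
\end{itemize}
The second version even improves the polynomial factor to $(1+\lambda_1)^{d(d-1)/2}$.
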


\begin{proof}
We first notice that $T_0$ is a semistandard Young tableau of shape $\lambda/\mu$. Hence, it contributes a monomial
$x^{\kappa}$ to \eqref{eq:tableau}. For any other semistandard Young tableau $T$ in the sum \eqref{eq:tableau}, the $j$th box from the top of a
skew column has entry at least $j$, the entry of $T_0$ in that box. Since
$x_1\ge\cdots\ge x_d>0$, comparison box by box gives
\begin{equation}
\label{eq_x7}
 x^{\wt(T)}\le x^{\wt(T_0)}=x^\kappa.
\end{equation}
Hence $x^\kappa$ is a maximal monomial in the tableau expansion.

\smallskip

Let us now estimate the weights of these tableaux. For a pair of Young diagrams $\nu\subseteq\eta$, let $\hat R(\eta/\nu)$ denote the number of rows of $\eta$ which intersect $\eta/\nu$. By \eqref{eq:strip-weight} and \eqref{eq:row-b-bound},
\begin{equation}\label{eq_x5}
 |\log\psi_{\eta/\nu}|\le2C_1 \hat R(\eta/\nu).
\end{equation}
Take a semistandard Young tableau $T$ of shape $\lambda/\mu$ and represent it as a sequence $
 \mu=\lambda^{(0)}\subseteq\lambda^{(1)}\subseteq\cdots
 \subseteq\lambda^{(d)}=\lambda$, where each $\lambda^{(j)}/\lambda^{(j-1)}$ is a horizontal strip. We claim that:
\begin{equation}
\label{eq_x6}
  \sum_{j=1}^d \hat R(\lambda^{(j)}/\lambda^{(j-1)})
 \le d^2+d\mu_1.
\end{equation}
Indeed, the first $d$ rows contribute at most $d^2$ to the sum. The remaining rows have at most $d \mu_1$ boxes of $\lambda/\mu$ (cf.\ left panel in Figure \ref{fig:canonical-tableau}), and therefore contribute at most $d\mu_1$ to the sum. Combining \eqref{eq_x6} with \eqref{eq_x5}, we get a uniform bound for the weights in \eqref{eq:tableau}:
\begin{equation}\label{eq:tableau-weight-bound}
 |\log\psi_T|\le2C_1(d^2+d\mu_1).
\end{equation}
By keeping only one monomial corresponding to $T_0$ from the expansion \eqref{eq:tableau}, this implies the lower bound on $P_{\lambda/\mu}(x_1,\ldots,x_d)$ in \eqref{eq:localization}. 

For the upper bound, we need to additionally estimate the total number of semistandard Young tableaux of shape $\lambda/\mu$ with entries from $\{1,\dots,d\}$. Ignoring column constraints, a row of length at most $l$ has at most
$(l+1)^{d-1}$ weakly increasing fillings with $\{1,\dots,d\}$ (a filling is determined by multiplicities of $1,2,\dots,d$; each factor of the first $d-1$ multiplicities takes at most $l+1$ values and the last one is then determined).
The first $d$ rows therefore have at most $(1+\lambda_1)^{d(d-1)}$ fillings, where we added $1$ so that the expression still makes sense for the trivial case $\lambda_1=0$.
Below them there are at most $d\mu_1$ boxes of the skew diagram $\lambda/\mu$, which have at most
$d^{d\mu_1}$ assignments when all constraints are ignored. Thus
\begin{equation}\label{eq:tableau-count}
 \#\{T\}\le (1+\lambda_1)^{d(d-1)}d^{d\mu_1}.
\end{equation}
The upper bound in \eqref{eq:localization} now follows from
\eqref{eq:tableau}, the monomial bound \eqref{eq_x7}, \eqref{eq:tableau-weight-bound},
and \eqref{eq:tableau-count}. 
\end{proof}

\begin{corollary}\label{Corollary_tilt}
Fix $1\le k\le d$ and $s\in\R$. For $y_1\ge\cdots\ge y_d>0$, put
\[
 x_j=
 \begin{cases}
 e^sy_j,&j\le k,\\
 y_j,&j>k.
 \end{cases}
\]
There exists a constant $C=C(q,t,d)>0$, such that 
for every $\mu\subseteq\lambda$:
\begin{equation}\label{eq:tilt}
 P_{\lambda/\mu}(y_1,\dots,y_d)
 \le C (1+\lambda_1)^{d(d-1)}
       e^{\mu_1(C+k\max(s,0))-sS_k(\lambda)}P_{\lambda/\mu}(x_1,\dots,x_d),
\end{equation}
where $S_k(\lambda)=\lambda_1+\dots+\lambda_k$.
For $s\le0$, the sharper bound
\begin{equation}\label{eq:negative-tilt}
 P_{\lambda/\mu}(y_1,\dots,y_d)
 \le e^{-sS_k(\lambda)}P_{\lambda/\mu}(x_1,\dots,x_d)
\end{equation}
holds for every choice of nonnegative variables $y_1,\dots,y_d$, without an ordering assumption.
\end{corollary}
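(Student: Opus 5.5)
The plan is to apply Theorem~\ref{Theorem_monomial_localization} twice, once at the variables $y$ and once at $x$, and compare the two leading monomials. A preliminary remark: if some column of $\lambda/\mu$ has more than $d$ boxes, then both sides of \eqref{eq:tilt} vanish by \eqref{eq:tableau} and there is nothing to prove. So we assume at most $d$ boxes per column, and $\kappa$ is defined by \eqref{eq:kappa}.

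The ordering $x_1\ge\cdots\ge x_d>0$ holds whenever $s\ge0$, but for $s<0$ it can fail at the index $k$. I therefore treat the two signs differently.

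\emph{Case $s\ge0$.} The variables $x$ are ordered, so the upper bound of Theorem~\ref{Theorem_monomial_localization} at $y$ and the lower bound at $x$ give
\[
P_{\lambda/\mu}(y)\le e^{2C(1+\mu_1)}(1+\lambda_1)^{d(d-1)}\,\frac{y^\kappa}{x^\kappa}\,P_{\lambda/\mu}(x).
\]
Here $y^\kappa/x^\kappa=e^{-sS_k(\kappa)}$. Lemma~\ref{Lemma_S_kappa} gives $S_k(\kappa)\ge S_k(\lambda)-k\mu_1$, and hence $e^{-sS_k(\kappa)}\le e^{-sS_k(\lambda)+sk\mu_1}$. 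Absorbing $e^{2C}$ into the prefactor and $2C\mu_1$ into the exponent yields \eqref{eq:tilt}, since $\max(s,0)=s$ in this case.

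\emph{Case $s\le0$.} Here I prove \eqref{eq:negative-tilt} directly, and it implies \eqref{eq:tilt}: we have $e^{\mu_1 C}C(1+\lambda_1)^{d(d-1)}\ge1$ once $C\ge1$, and $\max(s,0)=0$. The argument goes monomial by monomial in \eqref{eq:tableau}, so no ordering of the variables is needed. For a tableau $T$, $x^{\wt(T)}=e^{sN_k(T)}y^{\wt(T)}$, where $N_k(T)$ is the number of entries $\le k$. In a semistandard tableau of shape $\lambda/\mu$, entries $\le k$ can only lie in rows $1,\dots,k$, because column strictness forces the entry in row $i$ to be at least $i$, counting from the top of the skew column. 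Hence $N_k(T)\le S_k(\lambda)$. With $s\le0$ this gives $e^{sN_k(T)}\ge e^{sS_k(\lambda)}$, so $y^{\wt(T)}\le e^{-sS_k(\lambda)}x^{\wt(T)}$. Since the weights $\psi_T$ are nonnegative, summing over $T$ gives \eqref{eq:negative-tilt} for all nonnegative $y$.

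I expect the main obstacle to be the bookkeeping in the bound $N_k(T)\le S_k(\lambda)$. The precise fact needed is that the entry in the box $(i,j)$ is at least $i-\mu'_j$, so it is at least $i$ only after accounting for the skew offset. Even so, an entry $\le k$ in row $i$ requires $i\le k+\mu'_j$, and rows beyond $k$ could then contain such boxes. The cleaner route is to use the horizontal-strip decomposition: the boxes with entries $\le k$ form $\lambda^{(k)}/\mu$, and $\lambda^{(k)}$ is obtained from $\mu$ by adding $k$ horizontal strips. The rows of $\lambda^{(k)}$ interlace with those of $\lambda$, and this gives $|\lambda^{(k)}|-|\mu|\le S_k(\lambda^{(k)})\le S_k(\lambda)$. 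Alternatively, the first inequality follows from the fact that each horizontal strip increases $S_k$ by at least as much as it adds to rows beyond $k$ can be offset. If this step turns out to be delicate, the fallback is the same monomial comparison as in the case $s\ge0$, using $S_k(\kappa)\le S_k(\lambda)$.
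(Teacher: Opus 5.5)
Your $s\ge0$ case is the paper's argument: Theorem~\ref{Theorem_monomial_localization} applied at $y$ and at $x$, then Lemma~\ref{Lemma_S_kappa}. Your $s\le0$ strategy is also the paper's: compare monomials tableau by tableau in \eqref{eq:tableau} and use $\psi_T\ge0$. The weak point is the one combinatorial input, $N_k(T)\le S_k(\lambda)$, which you never justify correctly.

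\begin{itemize}
\item Your first reason, that entries $\le k$ lie only in rows $1,\dots,k$, is false for skew shapes. For $\lambda=(2,1)$, $\mu=(1)$, $k=1$, the box $(2,1)$ is the top of its skew column and can carry the entry $1$.
\item Your repair asserts $|\lambda^{(k)}|-|\mu|\le S_k(\lambda^{(k)})\le S_k(\lambda)$. This chain is true, but the reason you give does not work. The diagrams $\lambda^{(k)}$ and $\lambda$ need not interlace, since $\lambda/\lambda^{(k)}$ is a union of $d-k$ horizontal strips rather than one.
\item Interlacing would in any case only bear on the second inequality, which is immediate from $\lambda^{(k)}\subseteq\lambda$. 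The first inequality is the real content, and your ``alternatively'' sentence does not establish it.
\item The fallback does not rescue this. It yields a bound with the prefactor $e^{C(1+\mu_1)}(1+\lambda_1)^{d(d-1)}$ and requires $y$ to be ordered and positive. So it cannot give \eqref{eq:negative-tilt}, which is claimed with no prefactor and for arbitrary nonnegative $y$.
\end{itemize}

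The missing step is a column count, which is what the paper uses. By column strictness, the entries $\le k$ in column $j$ occupy at most the top $k$ boxes of the skew column. That column also has at most $\lambda'_j$ boxes in total. Hence the number of such entries in column $j$ is at most $\min(k,\lambda'_j)$. Summing over $j$ gives $N_k(T)\le\sum_j\min(k,\lambda'_j)=S_k(\lambda)$. In the paper's phrasing: slide these boxes up to the top of the corresponding column of $\lambda$, and they land in the first $k$ rows. This is also exactly why your inequality $|\lambda^{(k)}|-|\mu|\le S_k(\lambda^{(k)})$ holds. With this step inserted, your proof is complete and coincides with the paper's.
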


\begin{proof}
First suppose $s\le0$. For a tableau $T$ in \eqref{eq:tableau}, let
$m_k(T)$ be the number of entries in $\{1,\ldots,k\}$.
In each skew column, these entries occupy at most its first $k$ boxes.
Moving these boxes to the top of the corresponding column of $\lambda$
places them in the first $k$ rows. Thus $m_k(T)\le S_k(\lambda)$, and therefore
nonnegativity of the tableau weights gives
\[
 P_{\lambda/\mu}(x_1,\dots,x_d)
 =\sum_T\psi_T y^{\wt(T)}e^{s m_k(T)}
 \ge e^{sS_k(\lambda)}P_{\lambda/\mu}(y_1,\dots,y_d).
\]
This proves \eqref{eq:negative-tilt}, and hence also \eqref{eq:tilt} in the case $s\le 0$. 

Now suppose $s>0$. In this situation the $x_i$ inherit the ordering of the $y_i$, i.e.\ $x_1\ge x_2\ge\dots\ge x_d>0$.  If some column of the skew diagram $\lambda/\mu$ has height greater than $d$, then
both sides of \eqref{eq:tilt} vanish. Otherwise, we can apply \eqref{eq:localization} both to $P_{\lambda/\mu}(x_1,\dots,x_d)$ and $P_{\lambda/\mu}(y_1,\dots,y_d)$.
It remains to use Lemma \ref{Lemma_S_kappa}:
\[
 \frac{y^\kappa}{x^\kappa}
 =e^{-sS_k(\kappa)}
 \le e^{-sS_k(\lambda)+sk\mu_1}. \qedhere
\]
\end{proof}

\section{Exponential concentration and the law of large numbers}
\label{Section_concentration}

In this section we finish the proof of Theorem \ref{Theorem_uniform}. We argue in terms of partial sums:
\[
 A_k=\sum_{i=1}^k\alpha_i, \qquad  S_k(\lambda)=\sum_{i=1}^k\lambda_i, \qquad k\ge 1,\qquad   A_0=S_0(\lambda)=0.
\]
Our intermediate goal is to prove that, for every normalized $\rho$, any $k\ge1$, and
$\varepsilon>0$,
\begin{equation}\label{eq:row-concentration}
 M_n^\rho\left[\left|\frac{S_k(\lambda)}n-A_k\right|>\varepsilon\right]
 \le Ce^{-cn},
\end{equation}
with constants $C,c>0$ that may depend on $q,t,\rho,k,\varepsilon$.

\subsection{Finitely many $\alpha$ parameters}\label{sec:ordinary-example}
We first illustrate the argument when
\[
 \rho=(\alpha_1,\ldots,\alpha_d;0;0),\qquad
 \alpha_1\ge\cdots\ge\alpha_d>0,\qquad \sum_{j=1}^d\alpha_j=1.
\]
From \eqref{eq:tableau}, the support of $M^\rho_n$ consists of diagrams with at most $d$ rows. Fix $k<d$, choose $s\in\mathbb R$ and apply \eqref{eq_expectation_identity} (which is also \eqref{eq:skew-tilted-expectation} with $\mu=\emptyset$) for 
\begin{equation}
\label{eq_tilted_parameters}
 x_1=e^s \alpha_1,\,x_2=e^s \alpha_2,\,\dots,x_k=e^s \alpha_k,\,\, x_{k+1}=\alpha_{k+1},\, \dots, x_d=\alpha_d.
\end{equation}
Recalling that $\alpha_1+\dots+\alpha_d=1$, we get
\begin{equation}\label{eq:ordinary-tilted-expectation}
 \E_{M_n^\rho}\left[\frac{P_\lambda(x_1,\dots,x_d)}{P_\lambda(\alpha_1,\dots,\alpha_d)}\right]
 =\bigl(1+A_k(e^s-1)\bigr)^n.
\end{equation}
Corollary \ref{Corollary_tilt} with $\mu=\emptyset$, and the bound $\lambda_1\le n$, transforms
\eqref{eq:ordinary-tilted-expectation} into
\begin{equation*}
 \E_{M_n^\rho} \exp\left(sS_k(\lambda)\right)
 \le
 C(n+1)^{d(d-1)}\bigl(1+A_k(e^s-1)\bigr)^n.
\end{equation*}
Dividing both sides by $e^{sn A_k}$, we get
\begin{equation}\label{eq:ordinary-mgf_2}
 \E_{M_n^\rho}\exp\left(s(S_k(\lambda)-nA_k)\right)
 \le
 C(n+1)^{d(d-1)} \exp\left[n\left(\ln\bigl(1+A_k(e^s-1) \bigr) -sA_k\right)\right].
\end{equation}
The function $f(s):=\ln\bigl(1+A_k(e^s-1) \bigr) -sA_k$ satisfies $f(0)=0$ and $f'(0)=0$, i.e.\, $f(s)=O(s^2)$ for small $s$. We combine this observation with Markov's inequality for the expectation in \eqref{eq:ordinary-mgf_2}: for $\eps>0$
\begin{align*}
 \mathrm{Prob}\bigl[S_k(\lambda)-nA_k>\eps n\bigr]&\le  C(n+1)^{d(d-1)} \exp\left[n\left(f(s)-\eps s\right)\right], \quad s>0,\\
 \mathrm{Prob}\bigl[S_k(\lambda)-nA_k<-\eps n\bigr]&\le  C(n+1)^{d(d-1)} \exp\left[n\left(f(s)+\eps s\right)\right], \quad s<0.
\end{align*}
For small $s$ both expressions in the exponents are negative, and therefore both probabilities go to zero exponentially fast as $n\to\infty$. This proves \eqref{eq:row-concentration} for $1\le k \le d-1$. For $k\ge d$ the bound \eqref{eq:row-concentration} is trivial, because $S_k=n$ and $A_k=1$.

\subsection{Finitely many $\alpha$ parameters, arbitrary $\beta$ and $\gamma$ parameters} \label{Section_finite_alpha}
In this section we work with $\rho=(\alpha;\beta;\gamma)$, such that $\alpha_1\ge\alpha_2\ge\dots\ge \alpha_{d}>0$, $\alpha_{d+1}=\alpha_{d+2}=\dots=0$. There are no restrictions on the $\beta$ and $\gamma$ parts, except for the total normalization $\sum_i\alpha_i+\sum_j\beta_j+\gamma=1$. Our goal is to again prove \eqref{eq:row-concentration} by generalizing the arguments of the previous section.

If $d=0$, then the assertion is Proposition~\ref{Proposition_noalpha_bound}. If there are no $\beta$s or $\gamma$, then the previous section applies. Otherwise, we decompose the specialization
\[
 \rho=(\alpha_1,\dots,\alpha_d;\, \beta;\, \gamma)=\sigma\sqcup \tau, \qquad
 \sigma=(0;\, \beta;\, \gamma), \qquad \tau=(\alpha_1,\dots\alpha_d;\, 0;0).
\]
Proposition \ref{Proposition_binomial} gives the joint law
\begin{equation}\label{eq:finite-joint}
 \Prob_n(\mu,\lambda)
 =\ddim(\lambda)P_\mu(\sigma)P_{\lambda/\mu}(\alpha_1,\dots,\alpha_d).
\end{equation}
The inner diagram $\mu$ contains the contribution of the $\beta$ and $\gamma$ parameters. 
Lemma \ref{lem:random-size} gives, for every $\delta>0$,
\begin{equation}\label{eq:inner-small}
 \Prob_n \bigl[\mu_1>\delta n\bigr]\le C_\delta e^{-c_\delta n}.
\end{equation}
This bound will allow us to control the $\mu_1$ part of the bound \eqref{eq:tilt}.

We choose $1\le k\le d$, $s\in\mathbb R$, and introduce the tilted variables \eqref{eq_tilted_parameters}. The skew expectation identity \eqref{eq:skew-tilted-expectation} and $\sum_i\alpha_i+\sum_j\beta_j+\gamma=1$ give
\begin{equation}\label{eq:finite-tilted-expectation}
 \E_{\Prob_n}\left[\frac{P_{\lambda/\mu}(x_1,\dots,x_d)}{P_{\lambda/\mu}(\alpha_1,\dots,\alpha_d)}\right]=\bigl(1+A_k(e^s-1)\bigr)^n.
\end{equation}
Similarly to \eqref{eq:ordinary-mgf_2}, Corollary \ref{Corollary_tilt} transforms \eqref{eq:finite-tilted-expectation} into
\begin{equation}\label{eq:skew-mgf_2}
 \E_{\Prob_n}\exp\left(s(S_k(\lambda)-nA_k)-\mu_1 C_3\right)
 \le
 C(n+1)^{d(d-1)} \exp\left[n\left(\ln\bigl(1+A_k(e^s-1) \bigr) -sA_k\right)\right],
\end{equation}
where the constant $C_3$ stays bounded as long as $k$ and $s$ are bounded. Using $f(s)=\ln\bigl(1+A_k(e^s-1) \bigr) -sA_k$, we apply Markov's inequality to \eqref{eq:skew-mgf_2} and get the bounds:
\begin{align*}
 \mathrm{Prob}\bigl[S_k(\lambda)-nA_k>\eps n, \quad \mu_1\le \delta n\bigr]&\le  C(n+1)^{d(d-1)} \exp\left[n\left(f(s)-\eps s+\delta C_3\right)\right], \quad s>0,\\
 \mathrm{Prob}\bigl[S_k(\lambda)-nA_k<-\eps n, \quad \mu_1\le \delta n\bigr]&\le  C(n+1)^{d(d-1)} \exp\left[n\left(f(s)+\eps s+\delta C_3\right)\right], \quad s<0.
\end{align*}
For a given $\eps>0$, we first choose $s_1>0>s_2$, so that $f(s_1)-\eps s_1<0$ and $f(s_2)+\eps s_2<0$, and then choose $\delta$ small enough, so that $f(s_1)-\eps s_1+\delta C_3<0$ and $f(s_2)+\eps s_2+\delta C_3<0$. Hence, for the choices $s=s_1$ and $s=s_2$, the expressions in the exponents are negative, and we conclude that the probability
$$
  \mathrm{Prob}\bigl[ \bigl|S_k(\lambda)-nA_k\bigr|>\eps n, \quad \mu_1\le \delta n\bigr]
$$
goes to $0$ exponentially fast as $n\to\infty$. Combining with \eqref{eq:inner-small}, we get \eqref{eq:row-concentration}.

Now let $k>d$. By \eqref{eq:tableau}, on the support of \eqref{eq:finite-joint} every column
of $\lambda/\mu$ has height at most $d$. In particular,
$\lambda_{d+1}\le\mu_1$: otherwise column $\mu_1+1$ would have at least
$d+1$ boxes. Hence,
\[
 S_k(\lambda)\le S_d(\lambda)+(k-d)\mu_1.
\]
Therefore, using $A_k=A_d$ and \eqref{eq:inner-small}, the bound \eqref{eq:row-concentration} for $k>d$ follows from the $k=d$ case.

\subsection{Arbitrary $(\alpha;\beta;\gamma)$}
In this section we pass from finitely many non-zero $\alpha$ parameters to an arbitrary summable
sequence. The idea is to use the binomial decomposition of Proposition \ref{Proposition_binomial} to compare with the case of finitely many $\alpha$ parameters.

We fix an arbitrary normalized Macdonald-nonnegative specialization $\rho$ with parameters
\eqref{eq_parameter_space}. Take $0<\varepsilon<1$ and $k=1,2,\dots$. Choose $d\ge k$ such that
\[
 r=\sum_{j>d}\alpha_j<\varepsilon/8.
\]
If $r=0$, then we are back to the setting of the previous section. Otherwise, split the specialization $\rho$ as
\[
 \rho=\sigma\sqcup \tau,\qquad \sigma=(\alpha_1,\ldots,\alpha_d;\beta;\gamma),\qquad
 \tau=(\alpha_{d+1},\alpha_{d+2},\ldots;0;0).
\]
In the coupling $\mu\subseteq\lambda\in\Y_n$ of Proposition \ref{Proposition_binomial}, write
\[
 m=|\mu|,\qquad R=n-m.
\]
We have $R\sim\Bin(n,r)$ and, conditionally on $m$,
$\mu$ has distribution $M_m^{\bar\sigma}$, where $\bar\sigma=(1-r)^{-1}\cdot\sigma$.
The first $k$ $\alpha$-parameters of $\bar\sigma$ sum up to
$\bar A_k=A_k(1-r)^{-1}$. We have
\begin{equation}\label{eq_x8}
 A_k m \le\bar A_k m \le A_k n (1-r)^{-1}.
\end{equation}
In addition, since $\lambda$ differs from $\mu$ by adding $R$ boxes, we have
\begin{equation}\label{eq_x9}
 S_k(\lambda)-R\le S_k(\mu) \le S_k(\lambda).
\end{equation}
We next use the exponential concentration \eqref{eq:binomial-tail}. Since $r<\eps/8$, it implies that as $n\to\infty$ the inequality
\begin{equation}
\label{eq_x10}
 (1-\eps/4)n \le m \le n
\end{equation}
holds with probability exponentially close to $1$. Simultaneously, by Section \ref{Section_finite_alpha}, the inequality
\begin{equation}
\label{eq_x11}
 |S_k(\mu)-\bar A_k m|<\eps n /8
\end{equation}
also holds with probability exponentially close to $1$. Altogether, the inequalities \eqref{eq_x8}-\eqref{eq_x11} imply
$$
 |S_k(\lambda)- A_k n|<\eps n.
$$

\begin{remark}\label{rem:binomial-lower}
The lower-tail bound $S_k\ge n (A_k-\eps)$ (with probability exponentially close to $1$) can be deduced without the approximation of Macdonald polynomials of Theorem \ref{Theorem_monomial_localization}. Split off
$\sigma=(\alpha_1,\ldots,\alpha_k;0;0)$ from $\rho$. The resulting inner diagram
$\mu$ has at most $k$ rows, and therefore:
\[
 S_k(\lambda)\ge S_k(\mu)=|\mu|,\qquad
 |\mu|\sim\Bin(n,A_k).
\]
Hoeffding's inequality \eqref{eq:binomial-tail} applied to $|\mu|$ then gives the desired lower bound on $S_k(\lambda)$.
This argument does not yield the upper-tail bound, since it
does not control how many boxes the complementary specialization
contributes to the first $k$ rows. 
\end{remark}

\subsection{Completion of the proof}\label{sec:completion}
\begin{proof}[Proof of Theorem~\ref{Theorem_uniform}]
Applying \eqref{eq:row-concentration} with $\eps$ replaced by $\eps/2$ and noticing
\[
 \lambda_i=S_i(\lambda)-S_{i-1}(\lambda),\qquad
 \alpha_i=A_i-A_{i-1},
\]
we conclude that for each individual $k$ and with constants $c$ and $C$ depending on $k$,
$$
 M_n^\rho\left[\left|\frac{\lambda_k}n-\alpha_k\right|>\varepsilon\right]
 \le Ce^{-cn}, \qquad k=1,2,\dots.
$$
Applying this row estimate at parameters $(t,q)$ to
$\rho^\vee=(\beta;\alpha;\gamma)$, and using Lemma~\ref{lem:transposition}, we get
$$
 M_n^\rho\left[\left|\frac{\lambda'_k}n-\beta_k\right|>\varepsilon\right]
 \le Ce^{-cn}, \qquad k=1,2,\dots.
$$

To obtain uniformity over all coordinates, note that monotonicity and
normalization imply
\[
 0\le\frac{\lambda_i}{n},\alpha_i\le\frac1i,
 \qquad
 0\le\frac{\lambda'_j}{n},\beta_j\le\frac1j.
\]
Choose $m$ with $(m+1)^{-1}<\varepsilon$. Indices greater than $m$
cannot contribute to the event in \eqref{eq_uniform_concentration}.
A finite union bound over the remaining indices proves
\eqref{eq_uniform_concentration}.
\end{proof}

\end{document}